\documentclass[12pt]{amsart}

\usepackage{amsfonts}
\usepackage{amsmath}
\usepackage{amsrefs}
\usepackage{tikz-cd}
\usepackage[all]{xy}
\usepackage{amssymb}
\usepackage{amsthm}
\usepackage[english]{babel}
\usepackage{color}
\usepackage{epsfig}
\usepackage{graphicx}
\usepackage{hyperref}
\usepackage[utf8]{inputenc}
\usepackage{mathrsfs}
\usepackage{xcolor}
\usepackage{amsfonts}
\usepackage{amsmath}
\usepackage{setspace} % Paquete para controlar el espaciado entre líneas
\usepackage{amsrefs}
\usepackage{amssymb}
\usepackage{amsthm}
\newtheorem*{theorem*}{Theorem}
\usepackage[english]{babel}
\usepackage{color}
\usepackage{epsfig}
\usepackage{graphicx}
\usepackage{hyperref}
\usepackage[utf8]{inputenc}
\usepackage{mathrsfs}
\usepackage{nicefrac}
\usepackage{psfrag}
\usepackage{xcolor}
\usepackage{comment}
\usepackage{tikz}
\usepackage{fancyhdr}
\usepackage{lipsum}  
\usepackage{pgfplots}
\usetikzlibrary{arrows.meta, bending, positioning, decorations.markings,decorations.pathreplacing, patterns,snakes}
\hypersetup{
 colorlinks,
 citecolor=blue,
 filecolor=blue,
 linkcolor=blue,
 urlcolor=blue}

\newcommand{\R}{\mathbb R}
\newcommand{\Z}{\mathbb Z}

\newcommand{\U}{\mathcal U}

\newcommand{\abs}[1]{\left| #1 \right|}

\theoremstyle{plain}
\newtheorem{theorem}{Theorem}[section]

\newtheorem{corollary}[theorem]{Corollary}
\newtheorem{lemma}[theorem]{Lemma}
\newtheorem{proposition}[theorem]{Proposition}

\theoremstyle{definition}
\newtheorem{definition}[theorem]{Definition}
\newtheorem{remark}[theorem]{Remark}
\newtheorem{example}[theorem]{Example}

\theoremstyle{remark}

\newcommand{\CBstable}{\ensuremath{\mathrm{CB\text{-}stable}}\ } 
\usepackage{setspace}
\usepackage[margin=2cm]{geometry}
\definecolor{ao}{rgb}{0.0, 0.5, 0.0}

\providecommand{\abs}[1]{\lvert#1\rvert}

\makeatletter
\newsavebox\myboxA
\newsavebox\myboxB
\newlength\mylenA
\newcommand*\xoverline[2][0.75]{%
    \sbox{\myboxA}{$\m@th#2$}%
    \setbox\myboxB\null% Phantom box
    \ht\myboxB=\ht\myboxA%
    \dp\myboxB=\dp\myboxA%
    \wd\myboxB=#1\wd\myboxA% Scale phantom
    \sbox\myboxB{$\m@th\overline{\copy\myboxB}$}%  Overlined phantom
    \setlength\mylenA{\the\wd\myboxA}%   calc width diff
    \addtolength\mylenA{-\the\wd\myboxB}%
    \ifdim\wd\myboxB<\wd\myboxA%
       \rlap{\hskip 0.5\mylenA\usebox\myboxB}{\usebox\myboxA}%
    \else
        \hskip -0.5\mylenA\rlap{\usebox\myboxA}{\hskip 0.5\mylenA\usebox\myboxB}%
    \fi}
\makeatother

\begin{document}

\author[A. Artigue]{Alfonso Artigue}
\address{Departamento de Matemática y Estadística del Litoral\\
Centro Universitario Regional Litoral Norte\\
Universidad de la Rep\'ublica\\
Florida 1065, Paysandú, Uruguay.\\}
\email{aartigue@litoralnorte.udelar.edu.uy}

\author[L. Ferrari]{Luis Ferrari}
\address{Departamento de Matem\'atica y Aplicaciones\\
Universidad de la Rep\'ublica\\
Maldonado, Tacuaremb\'o entre Av. Artigas y Aparicio Saravia, CP 20000, Uruguay}
\email{luisrferrari@gmail.com}

\title{Metric-Independent Expansiveness}
\date{\today}

\begin{abstract}
In this article we introduce and study a natural form of expansivity, that we call \textit{metric-independent expansiveness}, for group actions on metrizable spaces. This notion means \textit{expansive with respect to every compatible metric}. For actions on locally compact $\sigma$-compact metric spaces, we show that this property admits a purely topological characterization: it is equivalent to what we call \textit{cocompact expansivity} and to the existence of an expansive extension to the one-point compactification.
We apply this characterization to ordinal spaces and to totally bounded spaces, obtaining criteria and examples that distinguish expansive actions from genuinely metric-independent ones.
A central theme in these applications is that metric-independent expansiveness can be recovered from expansive compact dynamics when the boundary of the compactification is dynamically isolated. Finally, we introduce the notion of Cauchy expansiveness and prove that every uniformly continuous Cauchy expansive action extends uniquely to an expansive action on the completion.
\end{abstract}

\maketitle

%\tableofcontents

\section{Introduction}

In the study of dynamical systems, conjugacy is usually taken as a natural notion of equivalence. To fix ideas, consider two maps $f\colon X\to X$ and $g\colon Y\to Y$ defined on topological spaces $X$ and $Y$. We say that $f$ and $g$ are \textit{conjugate} if there exists a homeomorphism $h\colon X\to Y$ such that $h\circ f=g\circ h$. From this viewpoint, the relevant dynamical properties and invariant sets should be preserved under conjugacy. For example, transitivity and topological mixing are invariant dynamical properties, and the non-wandering set and the set of periodic points are also preserved by a conjugacy.
However, some important properties are invariant only under additional assumptions. For instance, expansiveness is well known to be invariant under conjugacy in the compact metrizable setting, but there are expansive homeomorphisms that lose this property under conjugacies when compactness is dropped. Recall that a homeomorphism $f$ of a metric space $(X,d)$ is \textit{expansive} if there exists $c>0$ such that for every pair of distinct points $x,y\in X$, one has $d(f^i(x),f^i(y))>c$ for some $i\in\Z$.
For example, the map $f\colon\R\to\R$ given by $f(x)=2x$ is expansive for the usual distance on $\R$, but it is easy to change the metric and destroy expansiveness (see Example~\ref{ejemplo2}). An even simpler example is the identity map on the discrete space $\Z$, whose expansiveness clearly depends on the metric.

This phenomenon suggests that classical metric expansiveness is not, in general, a purely topological property, at least outside the compact metrizable setting. Nevertheless, there are topological formulations of expansiveness that do not depend on the choice of a particular metric.
For homeomorphisms of compact Hausdorff spaces
% , Achigar, Artigue and Monteverde
in \cite{Achigar} it was introduced orbit expansiveness by means of finite open covers.
Later, an analogous cover-theoretic approach for group actions on compact Hausdorff spaces was developed in \cite{Ferrari2025}, and this is the formulation that will be relevant for us.
Let us briefly recall this notion. If $\mathcal U$ is a finite open cover of $X$ and $A\subseteq X$, we write $A\prec\mathcal U$ whenever there exists $U\in\mathcal U$ such that $A\subseteq U$. An action $\Phi\colon G\times X\to X$ is said to be \textit{expansive by coverings} if there exists a finite open cover $\mathcal U$ of $X$ such that, for every $x,y\in X$,
\[
\{\Phi_g(x),\Phi_g(y)\}\prec\mathcal U \text{ for every } g\in G \quad\Longrightarrow\quad x=y.
\]
In that case, $\mathcal U$ is called an \textit{expansivity cover}. In the compact metrizable case, this cover-theoretic formulation agrees with the usual metric definition of expansiveness; see \cite{Achigar,Ferrari2025}. Thus, although metric expansiveness may fail to be preserved when one changes the compatible metric on a non-compact metrizable space, the cover approach shows that there is a genuinely topological core behind the concept.
A natural way to isolate this idea in the non-compact setting is to encode the behavior at infinity through compactifications. Among them, the one-point compactification is especially well suited for locally compact spaces, since it records whether different orbits may remain dynamically indistinguishable while escaping every compact set. From this perspective, expansiveness should not only separate orbits inside the space, but should also prevent distinct points from becoming asymptotically invisible near infinity. This idea leads naturally to the notions of cocompact expansivity and expansive extendibility to the one-point compactification, which will play a central role throughout the paper.
Our main goal is to show, in the locally compact $\sigma$-compact setting, that these notions are equivalent to metric-independent expansiveness.
% \rojo{Roughly speaking, our main result shows that, for actions on metrizable locally compact $\sigma$-compact spaces, metric-independent expansiveness can be characterized purely in topological terms. More precisely, in that setting we prove that metric-independent expansiveness is equivalent to cocompact expansivity, and also equivalent to the existence of an expansive extension to the one-point compactification.
Thus, the problem of deciding whether expansiveness depends on the choice of a compatible metric can be completely reformulated in terms of the topology at infinity.

We then apply this characterization to several classes of spaces. On the one hand, ordinal spaces provide a natural source of scattered, non-compact metrizable spaces where the interaction between topology and dynamics can be analyzed rather explicitly. On the other hand, totally bounded spaces show that metric-independent expansiveness is strongly influenced by the way the action sits inside a compact ambient space. In this direction, a key role is played by the dynamical behavior of the boundary: when the remainder of a compactification is dynamically isolated, expansiveness on the compact space can be transferred to the invariant open part. This shows that metric-independent expansiveness is substantially more rigid than ordinary metric expansiveness, and highlights the importance of the topology and dynamics near the boundary of the space.

A central theme emerging from these results is that metric-independent expansiveness is closely related to the possibility of realizing a non-compact system as the restriction of an expansive compact system whose boundary is dynamically isolated. This point of view not only clarifies the role of the one-point compactification, but also provides a flexible mechanism for producing new examples and sufficient conditions for metric-independent expansiveness.

Finally, we also introduce the notion of Cauchy expansiveness, formulated in terms of the large-scale behavior of Cauchy pairs and naturally adapted to uniform structures and completions. We prove that every uniformly continuous Cauchy expansive action extends uniquely to an expansive action on the completion. Although this perspective is somewhat different from the compactification approach, it fits naturally with the general philosophy of the paper, namely, that suitable control of the dynamics at infinity allows one to recover expansiveness from a genuinely topological viewpoint.

\section{Preliminaries}
Let \(X\) be a metrizable topological space and \(\varphi : G \times X \to X\) be a continuous action of the group $G$.
To simplify the notation we write $\varphi(g,x)=g \cdot x$.
If $d$ is a metric on $X$ we say that $\varphi$ is \textit{expansive with respect to} $d$ if
there exists \(c > 0\) such that for all \(x \neq y\) in \(X\), there exists \(g \in G\) such that
$$d(g \cdot x, g \cdot y) > c.$$

\begin{definition} \label{DefMet_Ind}
 We say that the action is \emph{metric-independent expansive}  (MIE) if it is expansive with respect to any metric compatible with the topology of the space.
%
%  for every metric \(d\) compatible with the topology of \(X\) there exists \(c > 0\) such that for all \(x \neq y\) in \(X\), there exists \(g \in G\) such that
% $$d(g \cdot x, g \cdot y) > c.$$
\end{definition}

Let us show that there are MIE homeomorphisms on non-compact spaces.

\begin{example}\label{ejemplo1}
Let $f : \Z \rightarrow \Z$ be defined by $f(x) = x + 1$. Clearly, if in $\Z$ we consider the usual metric then $f$ is expansive. 
But, in fact, $f$ is MIE.
Indeed, given any metric $d$ compatible with the topology, since $\{0\}$ is open, there exists $\epsilon > 0$ such that $B_d(0, \epsilon) \subset \{0\}$. Consequently, $\epsilon$ serves as an expansivity constant for $\varphi$ with respect to the metric $d$.
\end{example}

The next example is well-known and recalls that the expansivity may depend on the metric on a noncompact space.

\begin{example} \label{ejemplo2} 
Let $f : \R \rightarrow \R$ be the homothety with ratio $k > 1$, that is, $f(x) = kx$. It is clear that $f$ is an expansive homeomorphism with respect to the Euclidean metric. We define the metric 
\[
d'(x,y) = \dfrac{2\abs{x-y}}{\sqrt{(1 + x^2)(1 + y^2)}}
\]
given by the stereographic projection of $S^1$ onto $\R$. It is clear that $f$ is not an expansive homeomorphism with this metric.
\end{example}

\begin{remark}
%Let \( X \) be a metric space. 
MIE is preserved under conjugacy. It follows from the definition.
\end{remark}

It is easy to see that expansivity is preserved under restrictions to arbitrary invariant subsets (the same expansivity constant works for the restricted system).

\begin{proposition}\label{PreservMIE}
%Let \( X \) be a metric space. 
MIE is preserved under the restriction to a closed invariant subset.
% \( Y \subset X \).
\end{proposition}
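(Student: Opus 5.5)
Let $Y\subset X$ be a closed invariant subset and let $\rho$ be any metric on $Y$ compatible with its subspace topology. We must show that the restricted action is expansive with respect to $\rho$. The idea is to extend $\rho$ to a compatible metric $d$ on all of $X$, apply the MIE hypothesis on $X$ to $d$, and then restrict back to $Y$. By the remark above, the expansivity constant of an action is inherited by invariant subsets, and here the restriction of $d$ to $Y$ is exactly $\rho$.

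The key step is the extension. We will use Hausdorff's metric extension theorem: if $Y$ is a closed subset of a metrizable space $X$, then every compatible metric on $Y$ extends to a compatible metric on $X$. If we want to avoid quoting it, here is the construction we would follow. Fix a compatible metric $e$ on $X$ bounded by $1$, and replace $\rho$ by $\min(\rho,1)$. This changes neither compatibility nor expansivity, since a constant $c<1$ still works. Embed $(Y,\rho)$ isometrically into a normed space $E$, for instance via the Kuratowski embedding into $C_b(Y)$. Use Dugundji's extension theorem to extend the embedding to a continuous map $F\colon X\to E$. Then set
\[
d(x,y)=\|F(x)-F(y)\|+\min\{e(x,y),\,\operatorname{dist}_e(x,Y)+\operatorname{dist}_e(y,Y)\}
\]
and check that $d$ is a metric compatible with the topology of $X$ that agrees with $\rho$ on $Y$. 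Closedness of $Y$ is used precisely here: it guarantees that $\operatorname{dist}_e(x,Y)>0$ for $x\notin Y$, and it is what makes a compatible extension possible at all.

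With $d$ in hand, the rest is immediate. Since the action on $X$ is MIE, there is $c>0$ such that for all $x\neq y$ in $X$ some $g\in G$ satisfies $d(g\cdot x,g\cdot y)>c$. For $x\neq y$ in $Y$, invariance gives $g\cdot x,\,g\cdot y\in Y$, so $\rho(g\cdot x,g\cdot y)=d(g\cdot x,g\cdot y)>c$. Hence $c$ is an expansivity constant for the restricted action with respect to $\rho$. As $\rho$ was arbitrary, the restriction is MIE.

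The main obstacle is the extension step. The cleanest route is to cite Hausdorff's theorem directly. If a self-contained argument is needed, the delicate point in the construction above is verifying compatibility of $d$ near points of $Y$, which is why the $\min$ term is included.
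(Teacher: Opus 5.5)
Your proposal is correct and follows the paper's proof: extend the given compatible metric on the closed invariant set $Y$ to a compatible metric on $X$ via Hausdorff's metric extension theorem, apply MIE on $X$, and restrict back using invariance of $Y$. One caveat on your optional self-contained sketch: compatibility of $d$ at points of $Y$ does not come from the $\min$ term, which near $Y$ only forces $\operatorname{dist}_e(z,Y)\to 0$. It comes instead from the locality built into Dugundji's construction, namely that $F(z)$ is a convex combination of points $\iota(a)$, with $\iota$ your isometric embedding and $e(a,z)$ bounded by a fixed multiple of $\operatorname{dist}_e(z,Y)$, combined with evaluation at $x$ in the sup-norm Kuratowski embedding. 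A black-box continuous extension does not suffice: you can modify it on small disjoint balls around points $x_n$ that approach $Y$ while leaving every compact set, so that $F(x_n)=\iota(x)$ for a fixed $x\in Y$, and then $d(x_n,x)\le\operatorname{dist}_e(x_n,Y)\to 0$ although $x_n\not\to x$.
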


\begin{proof}
%\emph{Conjugation}: It is straightforward to verify that \( \varphi_h : G \times Y \rightarrow Y \) is also an action MIE, where \( h : Y \rightarrow X \) is a homeomorphism.
%\emph{Restriction to closed invariant subsets}:
Let \( \varphi \) be a MIE action of $G$ on $X$ 
and let \( Y \subset X \) be a closed invariant subset. 
Denote as \( d_Y \) a compatible metric on $Y$. 
%Then the restricted action
%\[
%  G \curvearrowright Y
%\]
%is MIE.
By Hausdorff’s metric extension theorem \cites{Hausdorff1930,Torunczyk1972} there exists a (complete) metric \( D \) on \( X \) that coincides with \( d_Y \) on \( Y \). 
Since \( \varphi \) is MIE, 
it is expansive for $D$. Consequently, the restriction is expansive for $d_Y$ and the proof ends.
%%there exists \( \varepsilon > 0 \) such that for every distinct pair \( u, v \in X \), there exists \( g \in G \) such that
%\[
%  D(g \cdot u,\, g \cdot v) \;\ge\; \varepsilon.
%\]
%If \( y \ne y' \in Y \), then \( g \cdot y,\, g \cdot y' \in Y \) and
%\[
%  d_Y\bigl(g \cdot y,\, g \cdot y'\bigr)
%  = D\bigl(g \cdot y,\, g \cdot y'\bigr)
%  \;\ge\; \varepsilon.
%\]
%Since \( d_Y \) is the restriction of an arbitrary compatible metric on \( Y \), the restricted action is MIE.
\end{proof}

In Example \ref{exaRestNoMie} we show that the restriction
of a MIE homeomorphism to an invariant subset, which is not closed, may not be MIE.

\section{Cocompact expansivity}

In this section we assume that \(X\) is a
% locally compact
topological space and \(\varphi : G \times X \to X\) is an action.
We say that \(\varphi\) is \emph{cocompact} if there exists a compact subset  $K$ such that $G.K = X$.
% \begin{remark}
% Observe that in Example~\ref{ejemplo2}, the homeomorphism is not MIE, although it is cocompact.
% To characterize MIE actions on $\text{LC}\sigma$-spaces cocompactness alone is not sufficient, so, an additional condition must be imposed. This is precisely the purpose of the following definition.
% \end{remark}
For $A,B$ families of subsets of $X$ we write $A\prec B$ if
every $a\in A$ is contained in some $b\in B$.
For $a\subset X$ we write $a\prec B$ if $a$ is contained in some $b\in B$.
We say that the open cover $\U$ is an \textit{expansivity cover} if
$\{g\cdot x,g\cdot y\}\prec \U$
for all  $g\in G$ implies
$x=y$.
These definitions are used to build the next notion, which we will show that is strongly related to the MIE.

\begin{definition}\label{cocompactExP}
Let \(\mathcal{U}\)
% = \{ U_1, \dots, U_n \}\)
be an open cover of \(X\). We say that \(\varphi\) is \emph{cocompactly expansive}, with \(\mathcal{U}\) as an expansivity cover,
if there exists a compact set \(K \subset X\) such that:
\begin{enumerate}
\item \(G \cdot K = X\); that is, \(\varphi\) is cocompact with respect to \(K\).
\item \(\{ g \cdot x, g \cdot y \} \prec \mathcal{U} \cup (X \setminus K)\) for all \(g \in G\) implies \(x = y\).
\end{enumerate}
\end{definition}

If the property of cocompact expansivity is expressed purely in metric terms, the following equivalence follows immediately.

\begin{remark}
If \(X\) is metrizable then the following statements are equivalent:
\begin{enumerate}
\item \(\varphi\) is cocompactly expansive.
\item There exists a metric \(d\) compatible with the topology, a compact set \(K \subset X\) such that \(G \cdot K = X\) and a constant \(c > 0\) such that for all distinct \(x, y \in X\), there exists \(g \in G\) satisfying \(d(g \cdot x, g \cdot y) > c\) and \(\{ g \cdot x, g \cdot y \} \cap K \neq \emptyset\).
\end{enumerate}
That is, the points are separated when at least one of them is in the compact set $K$.
In particular, when $X$ is compact and metric we can take $K=X$ to see that expansivity is equivalent to cocompact expansivity.
\end{remark}

In what follows we explore more properties of cocompact expansivity.

\subsection{Preservation of Cocompact Expansivity under restrictions to subgroups}\label{preserv_op_MIE}

A subgroup \( H \le G \) is called \emph{syndetic} if there exists a finite set \( F \subset G \) such that \( G = F H \).

\begin{proposition}[Inheritance to syndetic subgroups] \label{th:herencia}
Let \( \varphi : G \times X \to X \) be a continuous and cocompactly expansive action and let \( H \) be a syndetic subgroup of \( G \). Then the restricted action \( \varphi|_{H \times X} : H \times X \to X \) is also continuous and cocompactly expansive.
\end{proposition}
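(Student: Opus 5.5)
Continuity of the restricted action is immediate, since $H\times X$ carries the subspace topology of $G\times X$ and $\varphi|_{H\times X}$ is a restriction of a continuous map. The substance is cocompact expansivity. Fix a compact $K$ and an open cover $\U$ witnessing Definition~\ref{cocompactExP} for $\varphi$, and a finite set $F=\{f_1,\dots,f_n\}\subset G$ with $G=FH$. Replacing $F$ by $F^{-1}$ if convenient, we may also write $G=H F'$ with $F'=F^{-1}$ finite, since $G=G^{-1}=H^{-1}F^{-1}=HF^{-1}$. The plan is to use the new compact set
\[
K' = \bigcup_{f\in F'} f\cdot K,
\]
which is a finite union of compact sets, together with a refined cover
\[
\mathcal V=\bigwedge_{f\in F'} f\cdot \U ,
\]
the family of all intersections $f_1'U_1\cap\dots\cap f_n'U_n$ with $U_i\in\U$. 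Since $x\mapsto f\cdot x$ is a homeomorphism, each $f\cdot\U$ is an open cover, so $\mathcal V$ is an open cover.

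Cocompactness comes first. Given $x\in X$, write $x=g\cdot k$ with $k\in K$ and $g=hf$ with $h\in H$, $f\in F'$. Then $x=h\cdot(f\cdot k)\in H\cdot K'$, so $H\cdot K'=X$.

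For expansivity, suppose $x\ne y$. By hypothesis there is $g\in G$ such that $\{g x,g y\}\not\prec \U\cup\{X\setminus K\}$. Thus at least one of $gx,gy$ lies in $K$, and the pair lies in no single member of $\U$. Write $g=f^{-1}h$ with $h\in H$ and $f^{-1}\in F$, i.e.\ $f\in F'$. This uses $G=FH$ with the $F'=F^{-1}$ convention. Then $h x = f g x$ and $h y=fgy$. One of these points lies in $f\cdot K\subset K'$, so $\{hx,hy\}\not\subset X\setminus K'$. If $\{hx,hy\}\subset V$ for some $V\in\mathcal V$, then $V\subset f\cdot U$ for some $U\in \U$, hence $\{gx,gy\}=f^{-1}\{hx,hy\}\subset U$, which is a contradiction. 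Therefore $\{hx,hy\}\not\prec\mathcal V\cup\{X\setminus K'\}$, and $H$ is cocompactly expansive with cover $\mathcal V$ and compact set $K'$.

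The main obstacle is bookkeeping: the left/right placement of the finite set must be consistent in both uses. Cocompactness needs $G=HF'$, while the expansivity transfer needs $g=f^{-1}h$, i.e.\ $G=F'^{-1}H$. Both hold because $G=FH$ implies $G=HF^{-1}$ by inversion. A further subtlety is that if $\U$ is infinite, $\mathcal V$ is still a legitimate open cover, since the definition does not require finiteness. If finiteness were required, finiteness of $F'$ preserves it.
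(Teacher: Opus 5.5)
Your proof is correct and follows essentially the same route as the paper: the compact set $F^{-1}\cdot K$, the common refinement of the translated covers $f^{-1}\cdot\mathcal U$ ($f\in F$), and the transfer from a separating $g\in G$ to a separating $h\in H$ via $g=f^{-1}h$. Your bookkeeping is tighter than the paper's write-up in two places. First, you justify $H\cdot K'=X$ through $G=HF^{-1}$, which the paper's step $x=f_j.(h.k)\in H\cdot K'$ tacitly needs. Second, you take the full join over all of $F$, whereas the paper's family $\mathcal W$ also allows intersections over proper subsets $F_0\subseteq F$, and for those the closing contradiction only works when $f_j\in F_0$.
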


\begin{proof}
Let \( K \) be the compact set from the cocompactness of \( \varphi \), \( \mathcal{U} = \{ U_1, \ldots, U_N \} \) be the expansivity cover, and \( F = \{ f_1, \ldots, f_m \} \subset G \) be the finite set such that \( G = F H \).
Each \( f_j^{-1}.K \) is compact, so \( K' := \bigcup_{j=1}^m f_j^{-1}.K \) is compact as well.
For any \( x \in X \), choose \( g \in G \), \( k \in K \) such that \( x = g.k \) (since \( G \cdot K = X \)).
Writing \( g = f_j h \) with \( f_j \in F \), \( h \in H \), we obtain
\[
x = f_j.(h.k) \in H \cdot K'.
\]
Hence, \( H \cdot K' = X \).
Next we define
\[
\mathcal{W}
:= \left\{
  \bigcap_{f \in F_0} f^{-1}.U_{i(f)} \;\middle|\;
  \varnothing \ne F_0 \subseteq F,\;
  i(f) \in \{1, \ldots, N\}
\right\}.
\]
Since \( \mathcal{U} \) and \( F \) are finite, the family \( \mathcal{W} \) is also finite.
Each element of \( \mathcal{W} \) is open (being a finite intersection of open sets) and contains all the sets \( f^{-1}.U_i \), so it covers \( X \).
Adding \( X \setminus K' \), we obtain a finite open cover:
$
\mathcal{W} \cup \{ X \setminus K' \}.
$
Let \( x \ne y \in X \). Since \( \varphi \) is cocompactly expansive, there exists \( g \in G \) such that:
\[
\{ g.x, g.y \} \cap K \ne \varnothing,
\quad \text{and} \quad
\forall U \in \mathcal{U},\;
\{ g.x, g.y \} \not\subset U.
\]
We write \( g = f_j h \) with \( f_j \in F \), \( h \in H \), and define \( x' := h.x \), \( y' := h.y \).
Then \( \{ g.x, g.y \} = f_j.\{ x', y' \} \), and one of these points lies in \( K \), so its preimage lies in \( f_j^{-1}.K \subset K' \).
Thus,
$
\{ x', y' \} \cap K' \ne \varnothing.
$
If \( \{ x', y' \} \subset X \setminus K' \), then applying \( f_j \) gives
\( \{ g.x, g.y \} \subset X \setminus K \), contradicting the assumption.

Now take \( W = \bigcap_{f \in F_0} f^{-1}.U_{i(f)} \in \mathcal{W} \) and fix \( f_k \in F_0 \). Then \( W \subset f_k^{-1}.U_{i(f_k)} \).
If \( \{ x', y' \} \subset W \), then applying \( f_k \) yields:
$
\{ f_k.x', f_k.y' \} \subset U_{i(f_k)} \subset \mathcal{U};
$
which contradicts the expansivity assumption.
Therefore, \( \{ x', y' \} \not\subset W \), and \( \varphi|_{H \times X} \) is cocompactly expansive.
\end{proof}

Notice that the subgroup \( n\mathbb{Z} \subset\Z \) is syndetic for all $n\neq 0$.

\begin{corollary}
\label{coroCocoExpPotencia}
 If \( f : X \to X \) is a cocompactly expansive homeomorphism then \( f^n \) is also cocompactly expansive for every \( n \in \mathbb{Z}, n\neq 0 \).
\end{corollary}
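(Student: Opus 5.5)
The plan is to obtain the corollary directly from Proposition~\ref{th:herencia}, applied to the $\Z$-action generated by $f$. A homeomorphism $f\colon X\to X$ determines the action $\varphi\colon \Z\times X\to X$, $\varphi(k,x)=f^k(x)$, and saying that $f$ is cocompactly expansive means exactly that this $\Z$-action is. For $n\neq 0$, the action generated by $f^n$ is the restriction of $\varphi$ to the subgroup $n\Z\le \Z$, since $(f^n)^m=f^{nm}$. It therefore suffices to check that $n\Z$ is syndetic in $\Z$ and then invoke the proposition.

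For syndeticity, take the finite set $F=\{0,1,\dots,|n|-1\}$. Division with remainder gives $\Z=F+n\Z$: every $k\in\Z$ can be written as $k=r+nq$ with $0\le r<|n|$. This is the additive form of $G=FH$, so $n\Z$ is syndetic, as the paper already observes.

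Proposition~\ref{th:herencia} then shows that $\varphi|_{n\Z\times X}$ is continuous and cocompactly expansive. Its compact set is $K'=\bigcup_{r\in F} f^{-r}(K)$, and its finite cover is built from $\mathcal U$ together with $X\setminus K'$. Under the isomorphism $n\Z\cong\Z$, $nm\mapsto m$, this restricted action becomes the $\Z$-action generated by $f^n$. Cocompactness ($n\Z\cdot K'=X$) and the expansivity condition are phrased only in terms of orbits and sets of the form $\{h\cdot x, h\cdot y\}$ with $h$ ranging over the group, so relabelling the group does not affect them. Hence $f^n$ is cocompactly expansive.

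There is no substantial obstacle. The only point to state carefully is the identification of ``$f^n$ as a homeomorphism'' with the restricted action of the subgroup $n\Z$ under the reparametrization above. Negative $n$ is handled the same way, since $n\Z=(-n)\Z$.
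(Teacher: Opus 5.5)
Your proposal is correct and follows the paper's route: the corollary is read off from Proposition~\ref{th:herencia}, using that $n\Z$ is syndetic in $\Z$, and your explicit reparametrization $n\Z\cong\Z$ spells out what the paper leaves implicit. One caution if you ever unpack that proposition: the cover for $f^n$ should consist only of the full intersections $\bigcap_{r\in F} f^{-r}(U_{i(r)})$ together with $X\setminus K'$, because the paper's family $\mathcal{W}$ also contains partial intersections such as the single sets $f^{-r}(U_i)$, and with those the final contradiction step fails (e.g.\ for the full shift with the cylinder cover at coordinate $0$ and $n=2$).
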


The following example shows that MIE and expansivity are not preserved under the restriction of the action to an arbitrary subgroup.

\begin{example}
Let \( G = \mathbb{Z}^2 \curvearrowright X = \{0,1\}^{\mathbb{Z}^2} \) be the shift action with the metric
\[
d(x, y) = \sum_{(i,j)\in\mathbb{Z}^2} 2^{-(|i|+|j|)} \bigl|x(i,j) - y(i,j)\bigr|.
\]
Although the action of \( G \) is metric-independent expansive (MIE), its restriction to the subgroup
\[
H = \mathbb{Z} \times \{0\} \subset G
\]
is not.
\end{example}
\begin{proof}
Let \( x_0 \) be the zero configuration. For each \( n \ge 1 \), define
\[
y_n(i,j) = \begin{cases}
1,& \text{if } (i,j) = (0,n),\\
0,& \text{otherwise.}
\end{cases}
\]

For \( h = (h_1, 0) \in H \), we have
\[
h \cdot x_0 = x_0, \quad h \cdot y_n(i,j) = y_n(i - h_1, j),
\]
and therefore,
\[
d(h \cdot x_0, h \cdot y_n) = 2^{-(|h_1| + n)} \le 2^{-n}.
\]
Given any \( \varepsilon > 0 \), choose \( N \) such that \( 2^{-N} < \varepsilon \). Then for this pair,
\[
d(h \cdot x_0, h \cdot y_N) \le 2^{-N} < \varepsilon \quad \forall\, h \in H,
\]
which shows that there is no positive expansive constant for \( H \). Therefore, the action \( H \curvearrowright X \) is not MIE.
\end{proof}

% \begin{remark}
% In Section~\ref{preserv_op_MIE}, we will see that if the space is a $\text{LC}\sigma$-space (see Definition \ref{def_sigma_compact}) and the subgroup is syndetic, MIE is preserved under passage to subgroups.
% \end{remark}

\subsection{Preservation of Cocompact Expansivity under Coproducts}

In this section we show the stability of cocompact expansivity under coproducts. The coproduct of an action on $X$ and an action on $Y$ is the induced action on the disjoint union $X\sqcup Y$ given by
the componentwise action
\[
g \cdot z :=
\begin{cases}
g.x & \text{if } z = x \in X, \\[2pt]
g.y & \text{if } z = y \in Y.
\end{cases}
\]

\begin{proposition}
\label{th:coproducto}
Let \( G \) be a group acting cocompactly expansive on two topological spaces \( X \) and \( Y \). Then the coproduct of the actions is also cocompactly expansive.
\end{proposition}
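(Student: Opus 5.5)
We build the compact set and the cover on $X\sqcup Y$ directly from the data for $X$ and for $Y$. Let $K_X\subset X$ be a compact set with $G\cdot K_X=X$ and expansivity cover $\U_X$ as in Definition~\ref{cocompactExP}. Let $K_Y\subset Y$ and $\U_Y$ be the analogous data for $Y$. Put $Z=X\sqcup Y$, with the action given componentwise.

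The candidate data are $K:=K_X\sqcup K_Y$ and $\U:=\U_X\cup\U_Y$, where each member is viewed as a subset of $Z$.
\begin{enumerate}
\item Since $X$ and $Y$ are open in $Z$, every member of $\U$ is open in $Z$, and $\U$ covers $Z$.
\item $K$ is compact, being the union of two compact sets.
\item $G\cdot K=G\cdot K_X\sqcup G\cdot K_Y=Z$, so the action is cocompact with respect to $K$.
\end{enumerate}
If finiteness of the cover is wanted, as in the proof of Proposition~\ref{th:herencia}, it is preserved because $\U_X$ and $\U_Y$ are finite.

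It remains to check condition (2). Suppose $z,w\in Z$ satisfy $\{g\cdot z,g\cdot w\}\prec\U\cup\{Z\setminus K\}$ for all $g\in G$. We split into two cases.

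\emph{Case 1: $z,w$ lie in the same component, say $X$.} Then $g\cdot z,g\cdot w\in X$ for every $g$, since the action preserves each component.
\begin{itemize}
\item If the pair lies in some $U\in\U$, then $U$ cannot be a member of $\U_Y$, because $U\subset Y$ would miss $X$. Hence $U\in\U_X$.
\item If the pair lies in $Z\setminus K$, then it lies in $(Z\setminus K)\cap X=X\setminus K_X$.
\end{itemize}
So $\{g\cdot z,g\cdot w\}\prec \U_X\cup\{X\setminus K_X\}$ for all $g$, and cocompact expansivity on $X$ gives $z=w$. The case where both points lie in $Y$ is symmetric.

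\emph{Case 2: $z\in X$ and $w\in Y$.} No member of $\U$ contains both points, because each member lies inside $X$ or inside $Y$. So for every $g$ we must have $\{g\cdot z,g\cdot w\}\subset Z\setminus K$, and in particular $g\cdot z\notin K_X$ for all $g$. This is impossible: $z\in X=G\cdot K_X$, so $z=h\cdot k$ with $k\in K_X$, and then $h^{-1}\cdot z=k\in K_X$. So Case 2 never satisfies the hypothesis.

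The main point to watch is this mixed case: it is exactly where cocompactness enters, since it forces every orbit to meet $K$. Everything else is bookkeeping about the disjoint-union topology.
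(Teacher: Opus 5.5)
Your proof is correct and matches the paper's argument: the same data $K=K_X\sqcup K_Y$ and $\mathcal{U}=\mathcal{U}_X\cup\mathcal{U}_Y$, the same split into same-component and mixed pairs, and cocompactness (some $g\cdot z\in K_X$) to settle the mixed case. The only difference is that you argue contrapositively, assuming $\{g\cdot z,g\cdot w\}\prec\mathcal{U}\cup\{Z\setminus K\}$ for all $g$, whereas the paper directly exhibits a separating $g$.
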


\begin{proof}
Assume the data \( (\mathcal{U}_X, K_X) \) and \( (\mathcal{U}_Y, K_Y) \) satisfy the definition of cocompact expansiveness for \( X \) and \( Y \), respectively.

\medskip
\noindent\textbf{1. Cocompact set.}
Let
\[
K := K_X \sqcup K_Y \subset X \sqcup Y.
\]
Since \( G \cdot K_X = X \) and \( G \cdot K_Y = Y \), it follows that \( G \cdot K = X \sqcup Y \).

\medskip
\noindent\textbf{2. Finite covering.}
Let
\[
\mathcal{U} := \mathcal{U}_X \cup \mathcal{U}_Y.
\]
This is a finite open covering of \( X \sqcup Y \), since \( \mathcal{U}_X \) covers \( X \) and \( \mathcal{U}_Y \) covers \( Y \).

\medskip
\noindent\textbf{3. Universal expansivity condition.}
Let \( z \ne w \in X \sqcup Y \). We consider two cases:

\emph{Case A: \( z, w \in X \) or \( z, w \in Y\).}
By cocompact expansiveness in \( X \), there exists \( g \in G \) such that
\[
\{g.z, g.w\} \cap K_X \ne \varnothing,
\quad
\forall U \in \mathcal{U}_X, \;
\{g.z, g.w\} \not\subset U.
\]
Since \( \{g.z, g.w\} \subset X \), this pair is not contained in any \( U \in \mathcal{U}_Y \), and we also have \( \{g.z, g.w\} \cap K = \{g.z, g.w\} \cap K_X \ne \varnothing \). So the pair satisfies the condition with respect to \( (\mathcal{U}, K) \) in \( X \sqcup Y \).

\emph{Case B: \( z \in X \), \( w \in Y \) or (vice versa).}
Let \( g \in G \) be such that \( g.z \in K_X \). Then \( \{g.z, g.w\} \cap K = \{g.z\} \ne \varnothing \). Since \( g.z \in X \) and \( g.w \in Y \), the pair cannot be contained in any \( U \in \mathcal{U}_X \) or \( U \in \mathcal{U}_Y \), because all such sets are contained either in \( X \) or in \( Y \), respectively. Hence, the universal expansivity condition is satisfied.

\medskip

In all cases, there exists \( g \in G \) such that the pair \( (z,w) \) satisfies the expansivity condition with respect to the data \( (\mathcal{U}, K) \). Therefore, the action on \( X \sqcup Y \) is cocompactly expansive.
\end{proof}

\begin{corollary}
If \( X \) and \( Y \) are $\text{LC}\sigma$ metric spaces, and if there exist actions \( \varphi : G \times X \to X \) and \( \phi : G \times Y \to Y \) that are metric-independent expansive (MIE), then the direct sum \( \varphi + \phi \) defines an MIE action on the coproduct \( X \sqcup Y \).
\end{corollary}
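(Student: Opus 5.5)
The plan is to route everything through the characterization announced in the introduction: for actions on locally compact $\sigma$-compact metrizable spaces, MIE is equivalent to cocompact expansivity. Granting that equivalence (proved later in the paper for $\text{LC}\sigma$ spaces), the corollary reduces to Proposition~\ref{th:coproducto}. Concretely, I will carry out three steps. First, since $\varphi$ and $\phi$ are MIE on the $\text{LC}\sigma$ spaces $X$ and $Y$, the characterization gives that both actions are cocompactly expansive. Second, Proposition~\ref{th:coproducto} makes the coproduct action on $X\sqcup Y$ cocompactly expansive. Third, I will check that $X\sqcup Y$ is again an $\text{LC}\sigma$ metrizable space, and then apply the characterization in the converse direction to conclude that $\varphi+\phi$ is MIE.

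For the third step, local compactness is local and $X$, $Y$ are open in $X\sqcup Y$. A countable union of compact sets exhausting $X$, together with one exhausting $Y$, exhausts $X\sqcup Y$. For metrizability one can take $d=\min(d_X,1)$ on $X$, $\min(d_Y,1)$ on $Y$, and distance $1$ between the two pieces. Continuity of the coproduct action is also immediate, because $G\times(X\sqcup Y)$ is the disjoint union of the two open pieces $G\times X$ and $G\times Y$.

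The main obstacle is only bookkeeping about which hypotheses the characterization theorem needs. If it requires finite expansivity covers, Proposition~\ref{th:coproducto} already produces one, namely $\mathcal U_X\cup\mathcal U_Y$. If it also needs $G$ countable or the action continuous, those properties pass trivially to the coproduct. As a sanity check that avoids the characterization, one could argue directly: for any compatible metric $D$ on $X\sqcup Y$, its restrictions to the closed sets $X$ and $Y$ are compatible. MIE of $\varphi$ and $\phi$ then gives constants $c_X$ and $c_Y$ handling pairs inside a single piece. Pairs split between the pieces are the real difficulty, since $D(X,Y)$ may be $0$ when the pieces are not compact. This is exactly why the cocompact route, which uses the compact sets $K_X$ and $K_Y$ from Case~B of Proposition~\ref{th:coproducto}, is the one to follow.
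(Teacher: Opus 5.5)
Your proposal is correct and follows the paper's route: the paper's proof simply combines Theorem~\ref{TeoEquiv} (MIE $\Leftrightarrow$ cocompactly expansive for $\text{LC}\sigma$ spaces) with Proposition~\ref{th:coproducto}. You also check explicitly that $X\sqcup Y$ is again a locally compact, $\sigma$-compact metrizable space; the paper leaves this implicit, but it is needed to apply Theorem~\ref{TeoEquiv} back on the coproduct.
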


\begin{proof}
This follows directly from applying Proposition~\ref{th:coproducto} and Theorem~\ref{TeoEquiv}.
\end{proof}

\medskip

The following example shows that the property of being cocompactly expansive is not preserved under products.

\begin{example} \label{ex:no-producto}
Let \( G = \mathbb{Z} \), and consider the shift action
\[
  n \cdot x = x + n
  \qquad \text{on} \qquad
  X := Y := (\mathbb{Z}, \tau_{\mathrm{dis}}),
\]
with the discrete topology.

\begin{enumerate}
\item Each action \( G \curvearrowright X \) and \( G \curvearrowright Y \) is cocompactly expansive.
\item The diagonal action on \( X \times Y \),
  \[
  n \cdot (x, y) := (x + n,\; y + n),
  \]
  is not cocompact, and therefore not cocompactly expansive.
\end{enumerate}
\end{example}

\begin{proof}
We already saw that each individual action is metric-independent expansive. Since the space is $\text{LC}\sigma$-space, Theorem~\ref{TeoEquiv} implies that the actions are cocompactly expansive. However, the product action is not cocompact, and thus cannot be cocompactly expansive.

Indeed, the orbit of a point \( (x, y) \) under the diagonal action is
\[
\mathcal{O}(x, y) = \{(x + n,\; y + n) : n \in \mathbb{Z}\},
\]
which is an infinite straight line in \( \mathbb{Z}^2 \).

Let \( K \subset \mathbb{Z}^2 \) be compact (i.e., finite). The saturated set
\[
G \cdot K = \bigcup_{(x, y) \in K} \mathcal{O}(x, y)
\]
is a finite union of such lines.

However, covering \( \mathbb{Z}^2 \) requires \emph{infinitely many} distinct lines—one for each value of \( y - x \). Therefore, for any compact \( K \), we have \( G \cdot K \ne X \times Y \), and the diagonal action is not cocompact.
\end{proof}

\section{expansively extendible at a point}

Note that if a metric space is $\sigma$-compact but not locally compact, then its one-point compactification fails to be metrizable. Nevertheless, by employing the notion of cover expansivity, the following definition becomes entirely well-defined.

\begin{definition}
% Let \(X\) be a metrizable topological space, and let \(\varphi : G \times X \to X\) be an action.
We say that \(\varphi\) is \emph{expansively extendible at a point} if there exists an expansive action, in the sense of cover expansivity, \(\varphi' : G \times X' \to X'\), where \(X' = X \cup \{ p \}\) is the one-point compactification of \(X\), such that \(\varphi'|_{G \times X} = \varphi\) and \(g \cdot p = p\) for all \(g \in G\).
\end{definition}

Note that, within the class of metrizable locally compact spaces, being an \(\mathrm{LC}\sigma\)-space is precisely the condition ensuring that the Alexandroff compactification is metrizable; see \cite[Definition~5.1 and Theorem~5.3]{Kechris}.

\begin{theorem} \label{TeoEquiv}
Let \( X \) be a $\text{LC}\sigma$ metric space and let \( \varphi : G \times X \to X \) be an action. The following statements are equivalent:
\begin{enumerate}
\item \( \varphi \) is MIE.
\item \( \varphi \) is expansively extendible at a point.
\item \( \varphi \) is cocompactly expansive.
\end{enumerate} 
\end{theorem}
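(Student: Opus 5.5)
I will prove the cycle of implications (1) $\Rightarrow$ (2) $\Rightarrow$ (3) $\Rightarrow$ (1). Throughout, $X' = X\cup\{p\}$ is the one-point compactification. Since $X$ is $\text{LC}\sigma$, $X'$ is compact metrizable. Extending the action by $g\cdot p = p$ gives a continuous action on $X'$: each $g$ is a homeomorphism of $X$, hence proper, so it extends continuously at $p$. (If $G$ is topological one also checks joint continuity at $p$. I would assume $G$ discrete or handle this exactly as for properness.)

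\textbf{(1) $\Rightarrow$ (2).} Fix a compatible metric $\rho$ on the compact metrizable space $X'$, and let $d$ be its restriction to $X$. Then $d$ is compatible with the topology of $X$, so by MIE there is $c>0$ with the following property: for all $x\neq y$ in $X$ there is $g$ with $\rho(g\cdot x,g\cdot y)>c$. It remains to separate pairs of the form $x\in X$, $p$. Here I would use cocompactness-type information: I need some $g$ with $\rho(g\cdot x,p)>c'$, i.e.\ the orbit of $x$ leaves a fixed neighbourhood of $p$.

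\emph{This is the main obstacle.} MIE must force every orbit to visit a fixed compact set. I would argue by contradiction. Suppose there are points $x_n$ whose orbits stay inside $X\setminus K_n$, where $K_n$ is an exhaustion by compacts. Then I would build a compatible metric on $X$ that is very small far out, namely a metric of the form $\rho$ pulled back through a map collapsing toward $p$. The aim is to make pairs $x_n\neq y_n$ with orbits in $X\setminus K_n$ stay $1/n$-close for all $g$. To get such pairs I would take $y_n = h\cdot x_n$ or another point of the same orbit-closure tail, or else use that $\rho(g\cdot x_n,g\cdot y_n)\le \operatorname{diam}_\rho(X'\setminus K_n)\to0$ whenever both orbits avoid $K_n$.

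A cleaner route: apply MIE to $d=\rho|_X$ itself. If $x$ has orbit avoiding $K_n$, then so does its orbit-mate $g_0\cdot x\neq x$, unless the orbit is a single fixed point. Fixed points outside every $K_n$ would again be handled by pairing two such points. Then $\rho(g\cdot x,g\cdot g_0 x)\le 2\operatorname{diam}(X'\setminus K_n)$, which is a contradiction for large $n$. So there is a compact $K$ meeting every orbit, and with $c'<\rho(K,p)$ the extension is $\rho$-expansive. By \cite{Ferrari2025} it is then cover-expansive.

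\textbf{(2) $\Rightarrow$ (3).} Take a finite expansivity cover $\mathcal V$ of $X'$, and choose $V_0\in\mathcal V$ containing $p$. Shrink to a compact neighbourhood complement: take $K = X'\setminus W$, where $W\subset V_0$ is an open neighbourhood of $p$. Put $\mathcal U=\{V\cap X\}$. I need $G\cdot K=X$. If some $x$ had orbit inside $W$, then $\{g\cdot x,g\cdot p\}\subset V_0$ for all $g$, contradicting expansivity. For condition (2) of Definition~\ref{cocompactExP}: if $\{g\cdot x,g\cdot y\}\prec\mathcal U\cup\{X\setminus K\}$ for all $g$, then each such pair lies in some element of $\mathcal V$, since $X\setminus K\subset V_0$. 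Hence $x=y$.

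\textbf{(3) $\Rightarrow$ (1).} Let $d$ be any compatible metric on $X$, and let $K,\mathcal U$ be as in Definition~\ref{cocompactExP}. By compactness of $K$, there is a Lebesgue-type number $c>0$: every set of diameter $<c$ meeting $K$ lies in some $U\in\mathcal U$. Indeed, cover $K$ by finitely many balls $B(k,2c)$, each contained in some element of $\mathcal U$. Now let $x\neq y$. Cocompact expansivity gives $g$ with $\{g\cdot x,g\cdot y\}$ meeting $K$ and not contained in any $U$. Hence $d(g\cdot x,g\cdot y)\ge c$, so $c/2$ is an expansivity constant for $d$.
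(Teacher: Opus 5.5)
Your proof is correct, but it is organized differently from the paper's. The paper proves $(1)\Leftrightarrow(2)$ and $(2)\Leftrightarrow(3)$, while you run the cycle $(1)\Rightarrow(2)\Rightarrow(3)\Rightarrow(1)$.

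Your $(2)\Rightarrow(3)$ is the paper's argument: take $K$ to be the complement of the cover element containing $p$. You also make explicit why $G\cdot K=X$, which the paper leaves implicit.

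In $(1)\Rightarrow(2)$ the key idea is the paper's as well: distinct points whose orbits stay near $p$ remain uniformly close in $\rho|_X$, contradicting MIE. The paper phrases this with sequences satisfying $\delta(g\cdot x_n,p)\le 2^{-n}$ and the triangle inequality. You instead first extract, from an exhaustion, a compact set meeting every orbit. Two small points here:
\begin{itemize}
\item You need $K_n\subset\operatorname{int}K_{n+1}$ so that $\operatorname{diam}_\rho(X'\setminus K_n)\to 0$.
\item Pairing $x_n$ with $x_m$ directly, as the paper does, removes your fixed-point case split. Your speculative first attempt with a modified metric can simply be dropped.
\end{itemize}

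The genuine difference lies elsewhere. The paper's $(2)\Rightarrow(1)$ uses local compactness to choose $K'\supset K$ with $\operatorname{dist}(K,X\setminus K')>0$, then runs a three-case analysis with two Lebesgue numbers; it is paired with a one-line $(3)\Rightarrow(2)$. You replace both with a direct $(3)\Rightarrow(1)$. Its ingredients are a Lebesgue number for sets meeting the compact $K$ (cover $K$ by balls $B(k,c)$ with $B(k,2c)$ inside a member of $\mathcal U$), and the fact that the separating $g$ puts one of $g\cdot x,\ g\cdot y$ into $K$.

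This route is shorter and avoids the case analysis. It uses neither local compactness nor $\sigma$-compactness, so it proves that cocompact expansivity implies MIE on every metrizable space. That is the ``moreover'' clause of the paper's subsequent theorem. It also confines the $\text{LC}\sigma$ hypothesis to the one place it is really needed: metrizability of $X'$ in $(1)\Rightarrow(2)$.

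Finally, your remarks on continuity of the extended action are unnecessary. The definition of being expansively extendible at a point does not require continuity, and neither argument uses it.
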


\begin{proof}
\textbf{\( (1) \Rightarrow (2) \):}  
Let \( \varphi' : G \times X' \to X' \) be the extension of \( \varphi \) to the Alexandroff compactification \( X' \) of \( X \). Suppose that there exists a metric \( \delta : X' \times X' \to \mathbb{R}^+ \) compatible with the topology of \( X' \) such that \( \varphi' \) is not expansive. Then, for every \( n \in \mathbb{N} \), there exist distinct points \( x_n, y_n \in X' \) such that for all \( g \in G \),
\[
\delta(g \cdot x_n, g \cdot y_n) \leq 2^{-n}.
\]

Clearly, if this inequality holds for some subsequence \( (x_{n_k}, y_{n_k}) \subset X \), it contradicts the expansiveness of \( \varphi \) with respect to the metric \( \delta|_{X \times X} \), and thus \( \varphi \) would not be MIE. Therefore, the remaining case to consider is when \( y_n = p \) for all \( n \in \mathbb{N} \). But then
\[
\delta|_{X \times X}(g \cdot x_n, g \cdot x_m) \leq \delta(g \cdot x_n, p) + \delta(p, g \cdot x_m) \leq 2^{-n} + 2^{-m},
\]
which also contradicts the expansiveness of \( \varphi \) with respect to the metric \( \delta|_{X \times X} \). 

Since the Alexandroff compactification is compact and metrizable, and since, as recalled in the introduction, expansiveness by coverings coincides with metric expansiveness in the compact metrizable case, the implication follows.

\vspace{0.5em}
\textbf{\( (2) \Rightarrow (1) \):}  
Let \( \varphi' : G \times X' \to X' \) be the one-point extension of \( \varphi \) that is expansive by coverings. Let \( \mathcal{U} = \{ U_1, \ldots, U_n \} \) be an expansivity cover, where \( U_n = X' \setminus K \) with \( K \subset X \) compact, and set \( \mathcal{V} = \{ U_1, \ldots, U_{n-1} \} \), a covering of \( K \).

Let \( d \) be a metric compatible with the topology of \( X \), and choose a compact \( K' \subset X \) with \( K \subset K' \) such that
\[
\text{dist}(K, X \setminus K') = \epsilon > 0.
\]
Define \( \mathcal{U}' = \{ U_1, \ldots, U_{n-1}, U_n' \} \), where \( U_n' := U_n \setminus \{ p \} \). Let \( \delta \) be the Lebesgue number of \( \mathcal{V} \) and \( \delta' \) that of \( \mathcal{U}' \) over \( K' \).

Let \( x, y \in X \) with \( x \ne y \). Then:

- If \( x, y \in K \), there exists \( g \in G \) such that \( \{ g \cdot x, g \cdot y \} \nprec \mathcal{U} \), so \( d(g \cdot x, g \cdot y) > \delta \).
- If \( x \in K \) and \( y \in U_n' \), then \( d(x,y) > \delta' \).
- If \( y \notin K' \), then \( y \in U_n \), and there exists \( g \in G \) such that \( \{ g \cdot y, g \cdot p \} \nprec \mathcal{U} \), so \( g \cdot y \in K \). If \( g \cdot x \in K' \), we fall into a previous case; otherwise \( d(g \cdot x, g \cdot y) > \epsilon \).

Taking \( c = \min \{ \epsilon, \delta, \delta' \} \), we conclude that \( c \) is an expansivity constant for \( \varphi \).

\vspace{0.5em}
\textbf{\( (2) \Rightarrow (3) \):}  
If \( \varphi \) is expansively extendible at a point, then by definition it is expansive. To satisfy the definition of cocompact expansiveness, it suffices to take as \( K \) the complement of the open set in the expansivity cover of \( \varphi \) that contains \( p \).

\vspace{0.5em}
\textbf{\( (3) \Rightarrow (2) \):}  
Let \( \{ U_1, \ldots, U_n \} \) and \( K \) be the expansivity cover and compact set given by the definition of cocompactly expansive action. Then
\[
\{ U_1, \ldots, U_n, X' \setminus K \}
\]
is an expansivity cover for \( \varphi' : G \times X' \to X' \).
\end{proof}

\begin{remark}
For homeomorphisms, the implication $(1)\Rightarrow(2)$ in the following theorem is already implicit in Bryant's work \cite{Bryant}.
\end{remark}

\begin{remark}
Observe that in Example \ref{ejemplo2}, the one-point compactification of \(\R\) is homeomorphic to \(S^1\)
and the circle does not admit expansive homeomorphisms. Therefore, by Theorem \ref{TeoEquiv} we conclude that the action is not MIE, without needing to construct an explicit metric.
\end{remark}

\begin{corollary}
 Moreover, if the space \( X \) is $\sigma$-compact
 and \( f \) is MIE, so is \( f^n \) for all \( n \in \mathbb{Z} \), with $n \neq 0$.
\end{corollary}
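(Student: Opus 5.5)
The plan is to pass through cocompact expansivity and apply Theorem~\ref{TeoEquiv} twice, with Corollary~\ref{coroCocoExpPotencia} in between. The statement only mentions $\sigma$-compactness. However, Theorem~\ref{TeoEquiv} is formulated for $\text{LC}\sigma$ metric spaces, so I will read the corollary in that setting, where $X$ is locally compact, $\sigma$-compact and metrizable, as the word ``Moreover'' suggests. Let $f$ be an MIE homeomorphism of $X$, that is, an MIE action of $\Z$, and fix $n\neq 0$.

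\textbf{Step 1.} By the implication $(1)\Rightarrow(3)$ of Theorem~\ref{TeoEquiv}, the $\Z$-action generated by $f$ is cocompactly expansive. So there are a compact $K\subset X$ and a finite open cover $\mathcal U$ with the properties of Definition~\ref{cocompactExP}.

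\textbf{Step 2.} The action generated by $f^n$ is the restriction of this action to the subgroup $n\Z$. This subgroup is syndetic, since $\Z = F + n\Z$ with $F=\{0,1,\dots,|n|-1\}$. By Proposition~\ref{th:herencia}, or directly by Corollary~\ref{coroCocoExpPotencia}, $f^n$ is cocompactly expansive. Continuity of the restricted action is automatic.

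\textbf{Step 3.} The space $X$ is unchanged, so it is still an $\text{LC}\sigma$ metric space. The implication $(3)\Rightarrow(1)$ of Theorem~\ref{TeoEquiv}, applied to the $\Z$-action generated by $f^n$, then shows that $f^n$ is MIE. The case $n<0$ needs no separate treatment: Corollary~\ref{coroCocoExpPotencia} covers every nonzero $n$, and in any case $f^{-|n|}$ generates the same group action as $f^{|n|}$.

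The only real issue is the hypothesis. If $X$ were merely $\sigma$-compact and not locally compact, Theorem~\ref{TeoEquiv} would not apply, and there is no direct route. One could try a direct metric argument: given a compatible metric $d$, build a new metric $d'(x,y)=\max_{0\le i<|n|} d(f^i x,f^i y)$. This $d'$ is compatible because $f$ is a homeomorphism. Expansivity of $f$ for $d'$ would give a constant $c$ and some $m$ with $d'(f^m x,f^m y)>c$. Writing $m=qn+r$, this yields $d(f^{qn}(f^{r+i}x), f^{qn}(f^{r+i}y))>c$ for some $i$, but that concerns the points $f^{r+i}x, f^{r+i}y$, not $x,y$, and $f^{r+i}$ need not be uniformly continuous for $d$. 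This uniformity gap is exactly what the compactification argument avoids. So I would state and prove the corollary under the $\text{LC}\sigma$ assumption via Steps 1--3.
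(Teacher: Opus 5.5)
Your proposal is correct and is exactly the paper's argument: the paper's proof is the one-line combination of Theorem~\ref{TeoEquiv} (MIE $\Leftrightarrow$ cocompact expansivity) with Corollary~\ref{coroCocoExpPotencia}, passing from $f$ to $f^n$ at the level of cocompact expansivity. Like you, the paper tacitly works in the $\text{LC}\sigma$ setting, since the ``Moreover'' refers back to Theorem~\ref{TeoEquiv} and its proof cites that theorem, so your explicit reading of the hypothesis is the right one.
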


\begin{proof}
It follows from Theorem~\ref{TeoEquiv} and Corollary \ref{coroCocoExpPotencia}.
\end{proof}

\begin{remark}
Observe that in the proof of Theorem \ref{TeoEquiv}, we did not use the continuity of the action.
\end{remark}

% \begin{remark} \label{EquivContinua}
% Let \(X\) be a $\text{LC}\sigma$ metrizable space, and let \(\varphi : G \times X \to X\) be an action. The following statements are equivalent:
% \begin{enumerate}
%     \item \(\varphi\) is continuous and expansive independently of the metric.
%     \item \(\varphi\) is continuous and expansively extendible at a point.
%     \item \(\varphi\) is continuous and cocompactly expansive.
% \end{enumerate}
% \end{remark}

\begin{remark}
In \( (2 \Rightarrow 1) \), and \( (2  \Leftrightarrow 3) \) we used expansivity by coverings, which is equivalent to metric expansivity in the case where \(X'\) is metrizable. Therefore, we obtain the following result.
\end{remark}

\begin{theorem}
Let \(X\) be a topological space, and let \(\varphi : G \times X \rightarrow X\) be an action. Then \(\varphi\) is expansively extendible at a point if and only if \(\varphi\) is cocompactly expansive. Moreover, if either of these equivalent conditions holds, then \(\varphi\) is expansive independently of the metric.
\end{theorem}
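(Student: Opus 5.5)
The plan is to argue directly with covers, working in the one-point compactification $X'=X\cup\{p\}$. Its open sets are the open sets of $X$ together with the sets $X'\setminus K$ for $K\subset X$ compact (and, to be safe, closed in $X$). The action is extended by declaring $g\cdot p=p$. As the earlier remark allows, continuity plays no role, so I only need to transport the combinatorial condition ``$\{g\cdot x,g\cdot y\}\prec\mathcal U$ for all $g$ implies $x=y$'' between $X$ and $X'$.

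\textbf{Extendible $\Rightarrow$ cocompactly expansive.} Let $\mathcal U'$ be a finite expansivity cover of $X'$, and pick $U_0\in\mathcal U'$ with $p\in U_0$. Then $U_0=X'\setminus K$ with $K\subset X$ compact. Put $\mathcal U=\{U\cap X: U\in\mathcal U',U\ne U_0\}\cup\{U_0\cap X\}$; this is an open cover of $X$. For cocompactness, take $x\in X$ and apply expansivity to the distinct pair $x,p$. This gives $g$ with $\{g\cdot x,p\}\not\prec\mathcal U'$, which forces $g\cdot x\notin U_0$, i.e.\ $g\cdot x\in K$. Hence $G\cdot K=X$. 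For condition (2), suppose $x,y\in X$ and $\{g\cdot x,g\cdot y\}\prec\mathcal U\cup\{X\setminus K\}$ for every $g$. Each member of $\mathcal U\cup\{X\setminus K\}$ lies in some member of $\mathcal U'$, since $X\setminus K\subset U_0$. So $\{g\cdot x,g\cdot y\}\prec\mathcal U'$ for every $g$, and therefore $x=y$.

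\textbf{Cocompactly expansive $\Rightarrow$ extendible.} Given $\mathcal U$ and $K$, I take $\mathcal U'=\mathcal U\cup\{X'\setminus K\}$, as in $(3)\Rightarrow(2)$ of Theorem~\ref{TeoEquiv}. Pairs inside $X$ are handled by hypothesis (2), because a set in $X$ that lies in $X'\setminus K$ lies in $X\setminus K$. For a pair $x,p$, cocompactness gives $g$ with $g\cdot x\in K$. Then $\{g\cdot x,p\}$ is not contained in $X'\setminus K$, and it is not contained in any $U\in\mathcal U$ since $p\notin U$. So the pair is separated.

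\textbf{The ``moreover'' part.} Assume the equivalent conditions hold and let $d$ be any compatible metric on $X$. I would adapt the $(2)\Rightarrow(1)$ argument of Theorem~\ref{TeoEquiv}, but use only the cocompactly expansive data $(\mathcal U,K)$. The goal is a constant $c>0$ such that for $x\ne y$, with $g$ chosen so that the pair is separated, $d(g\cdot x,g\cdot y)>c$. Take $g$ with $\{g\cdot x,g\cdot y\}\not\prec\mathcal U\cup\{X\setminus K\}$; say $g\cdot x\in K$. Let $\lambda$ be a Lebesgue number, with respect to $d$, of the cover $\mathcal U\cup\{X\setminus K\}$ on a neighborhood of the compact set $K$. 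Such a number exists by compactness: every point of $K$ has a $d$-ball of some radius $2r$ inside a member of $\mathcal U$, and finitely many of the balls of radius $r$ cover $K$. Then $d(g\cdot x,g\cdot y)<\lambda$ would put both points in one cover member, a contradiction. So $c=\lambda$ works.

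I expect the main obstacle to be this Lebesgue-number step when $X$ is not locally compact, and the related point-set subtleties of $X'$ (openness of $X'\setminus K$ requires $K$ closed, which holds in a Hausdorff metrizable $X$). The finite-subcover argument only needs compactness of $K$, not local compactness, so it should go through. This also explains why no $\mathrm{LC}\sigma$ hypothesis is needed here, whereas the converse direction in Theorem~\ref{TeoEquiv} required metrizability of $X'$.
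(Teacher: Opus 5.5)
Your proof is correct. For the equivalence you use the same construction as the paper: $K$ is the complement of the cover member containing $p$, and conversely $X'\setminus K$ is adjoined to $\mathcal U$. You also supply the checks the paper leaves implicit. You derive $G\cdot K=X$ by separating $x$ from the fixed point $p$, and you verify that pairs $\{x,p\}$ are separated by $\mathcal U\cup\{X'\setminus K\}$ using cocompactness. Your caveat that $K$ must be closed for $X'\setminus K$ to be open is an assumption the paper makes silently.

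You genuinely differ in the ``moreover'' part. The paper gets it by pointing back to its proof of $(2)\Rightarrow(1)$ in Theorem~\ref{TeoEquiv}. That proof chooses a compact $K'\supset K$ with $\mathrm{dist}(K,X\setminus K')>0$ and then runs a case analysis with constants $\epsilon,\delta,\delta'$. Such a $K'$ forces $X$ to be locally compact at every point of $K$, since it contains a ball around each of them. So that argument does not literally carry over to the generality of the present statement.

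You instead work directly from the cocompact data $(\mathcal U,K)$. You take a single radius $\lambda$ such that $B_d(a,\lambda)$ lies in a member of $\mathcal U$ for every $a\in K$, which needs only compactness of $K$. You combine it with the observation that a separating $g$ must send one of the two points into $K$. This is shorter and avoids the case split. It also shows that cocompact expansivity implies MIE on every metrizable $X$, with no local compactness or $\sigma$-compactness, which is exactly what the statement claims.

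One small point: your argument gives $d(g\cdot x,g\cdot y)\ge\lambda$, while the paper's definition of expansivity requires a strict inequality. Take $c=\lambda/2$ instead of $c=\lambda$.
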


\section{Metric-Independent Expansivity in Ordinal Spaces}

\begin{proposition}
Let \(X\) be a $\text{LC}\sigma$ metric space, and let \(\varphi : G \times X \rightarrow X\) be a continuous expansive action that is independent of the metric. If \(|G| \leq \aleph_0\), then the Alexandroff compactification \(X' = X \cup \{ p \}\) satisfies the second countability axiom.
\end{proposition}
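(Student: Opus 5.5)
The plan is to derive second countability of \(X'\) from cocompactness, which is the form in which metric-independent expansiveness is used throughout the paper. By Theorem~\ref{TeoEquiv}, implication \((1)\Rightarrow(3)\), the MIE action \(\varphi\) is cocompactly expansive. So there is a compact set \(K\subset X\) with \(G\cdot K=X\).

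First, \(X\) is separable. Since \(K\) is a compact metric space, it has a countable dense subset \(D\subset K\). Because the action is continuous, each map \(x\mapsto g\cdot x\) is a homeomorphism of \(X\), with inverse given by \(g^{-1}\). Hence \(g\cdot D\) is dense in \(g\cdot K\). It follows that \(G\cdot D=\bigcup_{g\in G} g\cdot D\) is dense in \(\bigcup_{g\in G} g\cdot K=X\). Since \(|G|\le\aleph_0\), the set \(G\cdot D\) is a countable union of countable sets, hence countable. A separable metric space is second countable, so \(X\) has a countable base \(\mathcal B\).

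Second, we pass to \(X'=X\cup\{p\}\). Since \(X\) is locally compact, we may refine \(\mathcal B\) to a countable base consisting of open sets with compact closure. Because \(X\) is \(\sigma\)-compact, we can then fix an exhausting sequence of compact sets \(C_1\subset C_2\subset\cdots\) with \(C_n\subset \operatorname{int} C_{n+1}\) and \(\bigcup_n C_n=X\). We claim that \(\mathcal B\cup\{X'\setminus C_n : n\in\mathbb N\}\) is a countable base for \(X'\).

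The only point needing checking is a neighbourhood base at \(p\). Any open neighbourhood of \(p\) has the form \(X'\setminus C\) with \(C\subset X\) compact. The interiors \(\operatorname{int} C_{n+1}\) form an increasing open cover of \(X\), so by compactness \(C\subset C_n\) for some \(n\). Then \(X'\setminus C_n\subset X'\setminus C\), as required. Each \(X'\setminus C_n\) is open in \(X'\) because \(C_n\) is compact.

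The main obstacle is this step at \(p\), which is routine given the exhaustion. Alternatively, one can shortcut the whole argument: by the cited result in \cite{Kechris}, \(X'\) is compact metrizable, and compact metrizable spaces are second countable. I would present the cocompactness argument as the main proof, since it shows where the countability of \(G\) enters, and mention the shortcut as a remark.
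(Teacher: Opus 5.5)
Your proof is correct, but it takes a different route from the paper.

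\textbf{The paper's route.} It uses $(1)\Rightarrow(2)$ of Theorem~\ref{TeoEquiv} to obtain an expansive action on the compact space $X'$. It then quotes Theorem~6.4 of \cite{Ferrari2025}, which gives second countability of a compact space carrying an expansive action of a countable group.

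\textbf{Your route.} You use $(1)\Rightarrow(3)$ only to get a compact $K$ with $G\cdot K=X$. You then deduce separability of $X$ from the countable dense set $G\cdot D$, and build a countable base of $X'$ by hand from an exhaustion $C_n\subset\operatorname{int}C_{n+1}$. Your argument is elementary and self-contained. The paper's is a one-line appeal to a general result about compact spaces.

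\textbf{A correction to your framing.} Your claim that the cocompactness argument ``shows where the countability of $G$ enters'' is misleading under the stated hypotheses. Since $X$ is assumed $\sigma$-compact, $X=\bigcup_n C_n$ is already a countable union of compact metric, hence separable, sets. So $X$ is separable without any use of $G$, $K$ or expansiveness, and your second step then finishes the proof.

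Thus your closing shortcut, which matches the paper's own remark citing \cite{Kechris}, is really the whole story. The conclusion holds for every $\text{LC}\sigma$ metric space, so both the dynamical hypotheses and the assumption $|G|\le\aleph_0$ are superfluous; this applies to the paper's proof as well. The countability of $G$ would only do genuine work if $\sigma$-compactness were dropped and cocompactness assumed directly, since then $G\cdot K=X$ would supply $\sigma$-compactness. In that setting, however, Theorem~\ref{TeoEquiv} is not available to produce $K$ from MIE.
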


\begin{proof}
By Theorem \ref{TeoEquiv}, \(\varphi\) can be extended to an expansive and continuous action on the one-point compactification \(X'\). Moreover, by Theorem 6.4 in \cite{Ferrari2025}), if \(\lvert G \rvert \leq \aleph_0\), then \(X'\) satisfies the second countability axiom.
\end{proof}

A consequence of the theorem by Kato and Park is the following theorem.

\begin{theorem}\label{th:derived-degree-not-limit}
Let \(X\) be a countable scattered metric space.  
Then \(X\) admits a homeomorphism \(f\colon X\to X\) that is MIE if and only if the derived degree of \(X\) is not an infinite limit ordinal.
\end{theorem}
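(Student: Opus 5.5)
The plan is to reduce the statement to the Kato--Park theorem on countable compacta via Theorem~\ref{TeoEquiv}. Recall that Kato and Park proved that a countable compact metric space $Y$ admits an expansive homeomorphism if and only if its derived degree, meaning the ordinal $\beta$ with $Y^{(\beta)}$ finite and nonempty, is not a limit ordinal. I will assume throughout, as the section's standing setting suggests, that $X$ is locally compact. A countable space is automatically $\sigma$-compact, so $X$ is then an $\text{LC}\sigma$ space. By Theorem~\ref{TeoEquiv}, a homeomorphism $f$ of $X$ is MIE if and only if it extends to an expansive homeomorphism $f'$ of the one-point compactification $X'=X\cup\{p\}$ with $f'(p)=p$. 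Since $X'$ is a countable compact metrizable space, the problem becomes: \emph{$X'$ admits an expansive homeomorphism fixing $p$ if and only if the derived degree of $X$ is not an infinite limit ordinal.}

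First I compare the Cantor--Bendixson derivatives of $X$ and $X'$. For every ordinal $\alpha$ we have $X'^{(\alpha)}\cap X=X^{(\alpha)}$, and $p\in X'^{(\alpha)}$ exactly when $X^{(\beta)}$ is noncompact for all $\beta<\alpha$.

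If $X$ is compact, then $p$ is isolated in $X'$, so $X'=X\sqcup\{p\}$ has the same derived degree as $X$. An expansive homeomorphism of $X$ extended by $f'(p)=p$ remains expansive, and any expansive homeomorphism of $X'$ either fixes the isolated point $p$ or can be conjugated to do so.

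If $X$ is noncompact, let $\gamma$ be the least ordinal with $X^{(\gamma)}$ compact. Then $p\in X'^{(\gamma)}$, and the derived degree of $X'$ is $\gamma$ when $X^{(\gamma)}=\emptyset$, or the degree of the finite-rank tail otherwise. In the main case $X'^{(\gamma)}=\{p\}$, the point $p$ is the unique point of top rank. Hence every homeomorphism of $X'$ fixes $p$ automatically, and restricts to a homeomorphism of $X$.

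In each case I will therefore show that ``derived degree of $X$ is an infinite limit'' is equivalent to ``derived degree of $X'$ is a limit ordinal''. Kato--Park applied to $X'$ then gives both directions.

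For the existence direction I would also give explicit constructions, to be safe with the fixed-point requirement. By Mazurkiewicz--Sierpi\'nski, $X'\cong\omega^{\beta}\cdot n+1$. When $\beta$ is a successor, I take the standard Kato--Park expansive homeomorphism, which can be chosen to fix a top-rank point (choosing that point as $p$ up to homeomorphism of $X'$). The finite case $X=\mathbb{Z}$-like discrete is covered by Example~\ref{ejemplo1}.

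The main obstacle is the bookkeeping of derived degrees between $X$ and $X'$, in particular the boundary cases. One is when $X^{(\gamma)}$ is finite and nonempty; the other is when $\gamma$ is a limit but $X^{(\gamma)}\neq\emptyset$, so that $p$ is not the unique top-rank point. There I must check that an expansive homeomorphism of $X'$ can be modified, by composing with a homeomorphism permuting top-rank points, to fix $p$, and that the paper's convention for ``derived degree'' matches Kato--Park's. I would first settle the convention on the examples $\omega^{\omega}$ and $\omega^{\omega}+1$, and then argue the general case by transfinite induction on rank.
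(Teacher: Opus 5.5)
Your reduction through Theorem~\ref{TeoEquiv} is the same route the paper takes, and your bookkeeping is correct: $(X')^{(\alpha)}\cap X=X^{(\alpha)}$, and the point at infinity $p$ has rank exactly $\gamma$ in $X'$, where $\gamma$ is the least ordinal with $X^{(\gamma)}$ compact. When $X^{(\gamma)}$ is empty or finite, $p$ is a top-rank point of $X'$ and your argument closes. In the finite case, replace ``composing with a homeomorphism permuting top-rank points'', which need not preserve expansiveness, by a power $h^{m}$ of a Kato--Park homeomorphism $h$; that power is expansive and fixes every top-rank point. This top-rank case is also the only one the paper's proof really treats, since it tacitly takes $X\cong\omega^\alpha n$, i.e.\ $p$ the last point of $X'$.

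The gap is the case you did not isolate: $X^{(\gamma)}$ infinite. Then $(X')^{(\gamma+1)}\neq\emptyset$, so $p$ has rank strictly below the top rank of $X'$. No homeomorphism of $X'$ sends a top-rank point to $p$, so your relabelling trick is unavailable. Kato--Park applied to $X'$ gives no control over whether an expansive homeomorphism can fix $p$. Hence ``$\deg X$ infinite limit $\Longleftrightarrow$ $\deg X'$ limit'' together with Kato--Park does not yield the existence direction.

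Worse, the rank of $p$ is a genuine obstruction that no convention check or transfinite induction can remove: if $\gamma$ is an infinite limit ordinal, no expansive homeomorphism $h$ of $X'$ fixes $p$. To see this, take $x_k\to p$, $x_k\neq p$, with ranks $\delta_k$ cofinal in $\gamma$. Let $n_k$ have minimal modulus with $d(h^{n_k}x_k,p)>c$; after passing to a subsequence, $n_k>0$ and $n_k\to\infty$. Let $y$ be a limit of $h^{n_k}x_k$. Then $y\neq p$, and $y\in(X')^{(\gamma)}$ because derived sets are closed and $\gamma$ is a limit. Also $d(h^{-i}y,p)\le c$ for all $i\ge 1$, so $h^{-i}y\to p$ by expansiveness. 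Thus $p\in(X')^{(\gamma+1)}$, a contradiction.

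A concrete instance is $X=[0,\omega^\omega)\sqcup[0,\omega^{\omega+1}]$. Here $X'\cong\omega^{\omega+1}+1$ and $X^{(\omega+1)}$ is a single point, so $X$ and $X'$ both have derived degree $\omega+1$ in the Kato--Park sense. Yet $\gamma=\omega$, so $X$ admits no MIE homeomorphism. Using $\gamma$ alone as the degree fails too: for $X=\omega^\omega+1$ one has $\gamma=0$.

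What is actually true is that $X$ admits an MIE homeomorphism iff neither $\gamma$ nor the top rank of $X'$ is an infinite limit ordinal. For sufficiency when $p$ is not of top rank, split off a clopen $U\ni p$ with $U\cong\omega^\gamma+1$. Then take the disjoint union of a Kato--Park homeomorphism of $U$, which fixes its unique top point $p$, and one of $X'\setminus U$. Your plan to settle conventions on $\omega^\omega$ and $\omega^\omega+1$ never meets this case, so as written it would be trying to prove a false statement there.
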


\begin{proof}
If \(X\) is a countable scattered metric space, then its one-point compactification is also a countable scattered metric space. By Baker, \(X'\) is homeomorphic to an ordinal of the form \(\omega^{\alpha} + 1\) with the order topology. Since \(X\) admits an expansive homeomorphism independent of the metric by Theorem \ref{TeoEquiv}, \(X'\) also admits an expansive homeomorphism. Then, by Theorem 2.2 in \cite{Kato}, \( \deg(X') = \alpha \) is not an infinite limit ordinal. However, \(X\) is homeomorphic to \(\omega^{\alpha} n + 1\), and thus \(\deg(X') = \alpha\) or \(\alpha +1\). In any case, \(\deg(X)\) is not an infinite limit ordinal.

Conversely, since \(X'\) is homeomorphic to \(\omega^{\alpha} n + 1\), then \(X\) is homeomorphic to \(\omega^{\alpha} n\). If \(\alpha\) is an infinite limit ordinal, then \(\deg(X) = \alpha\); otherwise, \(\deg(X) = \alpha +1\). Thus, \(\alpha\) is not an infinite limit ordinal. Since \(\deg(X')= \alpha\), by Theorem 2.2 in \cite{Kato}, \(X'\) admits an expansive homeomorphism \(f\). However, in the construction of the expansive homeomorphism, \(x_{\infty}\) is the point of the one-point compactification and is fixed by the homeomorphism. Therefore, by Theorem  \ref{TeoEquiv}, the restriction \( f \upharpoonright X : X \to X \) is an expansive homeomorphism independent of the metric.
\end{proof}

\begin{theorem} 
A countable scattered metric space \(X\) admits a metric-independent \CBstable action if and only if the characteristic \((\alpha, n)\) of its one-point compactification satisfies either \(\alpha\) is not an infinite limit ordinal or \(n > 1\).
\end{theorem}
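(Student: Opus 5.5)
The approach is to reduce everything to the one-point compactification via Theorem~\ref{TeoEquiv}, and then to invoke the known classification of compact countable scattered spaces that carry expansive \CBstable actions. Throughout, write $X'=X\cup\{p\}$. Since $X$ is a countable scattered $\text{LC}\sigma$ metric space, $X'$ is a countable compact metrizable scattered space. By the Mazurkiewicz--Sierpi\'nski classification (Baker's theorem used in Theorem~\ref{th:derived-degree-not-limit}), $X'\cong\omega^{\alpha}n+1$, and this defines its characteristic $(\alpha,n)$. By Theorem~\ref{TeoEquiv}, an action on $X$ is MIE if and only if it extends to an expansive action on $X'$ fixing $p$. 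Moreover, since the Cantor--Bendixson levels of $X$ are the traces of those of $X'$ (away from $p$), the action on $X$ is \CBstable exactly when its extension is. So the statement becomes: $X'$ carries an expansive \CBstable action fixing $p$ if and only if $\alpha$ is not an infinite limit ordinal or $n>1$.

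\emph{Necessity.} Suppose $X$ admits an MIE \CBstable action. Extend it to $X'$ using Theorem~\ref{TeoEquiv}. This gives an expansive \CBstable action on $\omega^{\alpha}n+1$. I would then quote the compact classification (the group-action version of Kato--Park's Theorem~2.2, as developed in \cite{Ferrari2025}). It says that such an action exists only when $\alpha$ is not an infinite limit ordinal or $n>1$. The intuition is that when $\alpha$ is a limit and $n=1$, the unique top point is fixed, and neighbourhoods of it contain points of unboundedly high rank below $\alpha$. Stability of the rank levels then prevents any finite cover from separating orbits near that point.

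\emph{Sufficiency.} Suppose $\alpha$ is not an infinite limit ordinal or $n>1$. Take, from the same classification, an expansive \CBstable action $\psi$ of some group $G$ on $\omega^{\alpha}n+1$. Being \CBstable, $\psi$ permutes the $n$ points of top rank $\alpha$. Let $H\le G$ be the kernel of this permutation representation. Then $H$ has finite index, so it is syndetic. By Proposition~\ref{th:herencia}, applied in the compact case where $K=X'$ and cocompact expansivity is just expansivity, the restriction of $\psi$ to $H$ is still expansive, and it fixes every top point. It is still \CBstable.

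\emph{Placing $p$.} It remains to arrange that $p$ is a fixed point, and I expect this to be the main obstacle. If $X$ is noncompact, then $p$ is a non-isolated point of $X'$. I would argue that $p$ can be taken to be of top rank $\alpha$: choose the homeomorphism $X'\cong\omega^{\alpha}n+1$ so that $p$ corresponds to one of the top points, using homogeneity of points of equal rank in ordinal spaces. Conjugating $\psi|_H$ by this homeomorphism gives an expansive \CBstable action on $X'$ fixing $p$, and Theorem~\ref{TeoEquiv} yields an MIE \CBstable action on $X$. The delicate point is exactly whether $p$ may sit at a rank below $\alpha$, and how that interacts with the characteristic. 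If $X$ is compact, $p$ is isolated in $X'=X\sqcup\{p\}$. In that case I would instead build the action on $X$ directly and let the trivial action on $\{p\}$ fix $p$. The coproduct is then expansive by Proposition~\ref{th:coproducto}, since the trivial action on a point is trivially expansive. I would check these rank bookkeeping cases carefully against the definition of characteristic.
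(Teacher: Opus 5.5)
Your overall route is the paper's. You reduce via Theorem~\ref{TeoEquiv} to expansive actions on $X'$ fixing $p$, and you quote the compact classification (Theorem~4.2 of \cite{Ferrari2025}) in both directions. The paper then settles the fixed point with the bare assertion that the construction can always fix $p$. The gap is in your justification of that step.

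You cannot choose the identification $X'\cong\omega^{\alpha}n+1$ so that $p$ becomes a top point. Cantor--Bendixson rank is preserved by homeomorphisms, and the rank $\beta$ of $p$ in $X'$ is determined by $X$. For example, $X=(\omega^2+1)\setminus\{\omega\}$ is open and non-compact in $\omega^2+1$, so $X'=\omega^2+1$ with $p=\omega$. Here $(\alpha,n)=(2,1)$ but $\beta=1$. Homogeneity only moves $p$ among points of rank $\beta$. So your argument covers $\beta=0$ (the coproduct) and $\beta=\alpha$ (the kernel of the permutation of the top points), and leaves $0<\beta<\alpha$ open.

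This case needs an actual construction, not rank bookkeeping. If $\beta$ is an infinite limit ordinal, no expansive action of a finitely generated group can fix $p$; in particular no single homeomorphism can. To see this, take a clopen $V\ni p$ inside a cover element containing $p$, with $V\cap(X')^{(\beta)}=\{p\}$. Take a clopen $W\ni p$ with $s(W)\subset V$ for all $s$ in a finite symmetric generating set. By compactness, all points of $V\setminus W$ have rank $<m$ for some $m<\beta$. Yet a point of $W\setminus\{p\}$ of rank $\ge m$ must pass through $V\setminus W$ on its way out of $V$. So for, e.g., $X=(\omega^{\omega+1}+1)\setminus\{\omega^{\omega}\}$, you cannot simply reuse the classification on $X'$ with $p$ fixed.

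One way to close the gap, for $0<\beta<\alpha$:
\begin{enumerate}
\item Pick a clopen $V\ni p$ with $V\cap(X')^{(\beta)}=\{p\}$. Then $V\cong\omega^{\beta}+1$, and $X'\setminus V$ still has characteristic $(\alpha,n)$.
\item If $\beta$ is not an infinite limit, take expansive actions of a group $G_1$ on $V$ and of $G_2$ on $X'\setminus V$. The unique top point $p$ of $V$ is automatically fixed. Let $G_1\times G_2$ act on the disjoint union, each factor acting trivially on the other piece. This is expansive by the argument of Proposition~\ref{th:coproducto} with $K$ the whole space.
\item If $\beta$ is an infinite limit, enlarge $V$ to a clopen copy of $\omega^{\beta}\cdot 2+1$ containing $p$ and a second rank-$\beta$ point; there are infinitely many such points since $\beta<\alpha$. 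Use the $n=2$ case of the classification, together with your finite-index trick, to fix both top points. Then combine with $X'\setminus V$ as before.
\end{enumerate}
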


\begin{proof}
By Theorem 4.2 in \cite{Ferrari2025}, we know that \(X'\) admits an expansive \CBstable action if and only if the characteristic \((\alpha, n)\) satisfies either \(\alpha\) is not an infinite limit ordinal or \(n > 1\). However, the action can always be constructed with the point \(p\) of the compactification as a fixed point under the action.
\end{proof}

\section{Metric-Independent Expansivity in Totally Bounded Spaces}

\noindent

In this section we investigate MIE in totally bounded spaces, focusing in particular on those that arise as open subsets of a compact space.

\begin{definition}[Totally Bounded Space]
A metric space \(X\) is said to be \emph{totally bounded} if, for every \(\varepsilon > 0\), there exists a finite set \(\{x_1, x_2, \dots, x_n\} \subset X\) such that
\[
X \subseteq \bigcup_{i=1}^{n} B(x_i, \varepsilon),
\]
where \(B(x_i, \varepsilon) = \{y \in X : d(x_i, y) < \varepsilon\}\) is the open ball of radius \(\varepsilon\) centered at \(x_i\).
\end{definition}

\begin{definition}
Let \(X\) be a topological space and \(\varphi : G \times X \rightarrow X\) an action. We say that \(K \subset X\) is \emph{dynamically isolated} if there exists an open set \(U\) with \(K \subset U\) such that
\[
\bigcap_{g \in G} g.U = K.
\]
\end{definition}

Before proving the following theorem, we recall some fundamental facts about equivalence relations and prove a few lemmas.

\begin{definition}
Let \( X \) be a set and \( \sim \) an equivalence relation on \( X \). A subset \( A \subseteq X \) is said to be \emph{saturated} (with respect to \( \sim \)) if for every \( x \in A \), the equivalence class \( [x] \subseteq A \); that is, \( A \) contains entirely each equivalence class it intersects.
Equivalently, \( A \) is saturated if there exists a subset \( B \subseteq X/{\sim} \) such that \( A = \pi^{-1}(B) \), where \( \pi : X \to X/{\sim} \) is the canonical projection sending each point \( x \in X \) to its equivalence class \( [x] \in X/{\sim} \).
\end{definition}

\begin{lemma}\label{L:aislamiento-en-el-cociente}
Let \((X,d)\) be a compact metric space, \(\varphi\colon G\times X\to X\) a continuous action, and \(K\subset X\) a closed \(\varphi\)-invariant subset. Define the equivalence relation
\[
  x \sim y 
  \quad\Longleftrightarrow\quad 
  (\,x = y\,)\ \text{or}\ (\,x,y\in K\,),
\]
and let \(\pi\colon X\to Y := X/{\sim}\) be the canonical projection, with \(p = [K]\) denoting the equivalence class of all points in \(K\). Then:
\begin{enumerate}
  \item \(Y\) is compact.

  \item The induced action 
  \[
    \varphi'\colon G\times Y \;\longrightarrow\;Y,\qquad 
    \varphi'(g,[x]) := [\,\varphi(g,x)\,]
  \]
  is well defined and continuous.
  \item The restriction 
  \[
    \pi\bigl|_{\,X\setminus K}\;:\;X\setminus K \;\xrightarrow{\;\cong\;}Y\setminus\{p\}
  \]
  is a homeomorphism. In particular, \(X\setminus K\) and \(Y\setminus\{p\}\) are homeomorphic, and \(\pi\) maps \(K\) to the point \(p\).
\end{enumerate}
\end{lemma}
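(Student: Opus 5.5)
The plan is to verify the three items in order, working directly with the quotient topology on \(Y\): a set \(V\subseteq Y\) is open exactly when \(\pi^{-1}(V)\) is open in \(X\). It is convenient to record first that \(\pi\) is a closed map. If \(C\subseteq X\) is closed, its saturation is \(C\) when \(C\cap K=\varnothing\) and \(C\cup K\) otherwise. In both cases the saturation is closed, because \(K\) is closed.

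\textbf{Item (1).} Compactness of \(Y\) is immediate: \(Y=\pi(X)\) is a continuous image of a compact space. One may also note that \(Y\) is Hausdorff, since closed maps with compact fibers preserve it. This is not required by the statement, but it is useful later when \(Y\) is viewed as a compactification.

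\textbf{Item (2).} The action is well defined by invariance. If \(x\sim y\) with \(x\neq y\), then \(x,y\in K\), so \(g.x,g.y\in K\) and hence \([g.x]=[g.y]\).

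For continuity, the cleanest route is to observe that \(\mathrm{id}_G\times\pi\colon G\times X\to G\times Y\) is a quotient map. I would justify this by checking directly that it is closed and surjective. Let \(C\subseteq G\times X\) be closed and let \((g_0,[y_0])\notin(\mathrm{id}\times\pi)(C)\).
\begin{itemize}
\item If \(y_0\notin K\), use the open saturated set \(X\setminus K\). On it \(\pi\) is injective, and the problem reduces to the homeomorphism of item (3).
\item If \(y_0\in K\), then \(\{g_0\}\times K\) is disjoint from \(C\). By the tube lemma, since \(K\) is compact, there are open sets \(W\ni g_0\) and \(U\supseteq K\) with \((W\times U)\cap C=\varnothing\). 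The set \(U\) is saturated, because it contains \(K\). Therefore \(W\times\pi(U)\) is an open neighborhood of \((g_0,p)\) missing the image of \(C\).
\end{itemize}
Once \(\mathrm{id}\times\pi\) is a quotient map, \(\varphi'\circ(\mathrm{id}\times\pi)=\pi\circ\varphi\) is continuous, and so \(\varphi'\) is continuous.

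If one wants to avoid quotient-of-products issues entirely, there is an alternative: check continuity of \(\varphi'\) at \((g_0,p)\) directly. Given an open \(V\ni p\), the preimage \(\pi^{-1}(V)\) is an open set containing \(K\). Since \(\varphi\) is continuous and \(\{g_0\}\times K\) is compact with \(\varphi(\{g_0\}\times K)=K\), the tube lemma gives \(W\ni g_0\) and \(U\supseteq K\) with \(\varphi(W\times U)\subseteq\pi^{-1}(V)\). At points \((g_0,[x])\) with \(x\notin K\), continuity follows from item (3).

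\textbf{Item (3).} The restriction \(\pi|_{X\setminus K}\) is a continuous bijection onto \(Y\setminus\{p\}\). This target is open in \(Y\), because its preimage \(X\setminus K\) is open. Every open subset \(O\) of \(X\setminus K\) is open and saturated in \(X\), so \(\pi(O)\) is open in \(Y\). Hence the restriction is open and therefore a homeomorphism. That \(\pi(K)=\{p\}\) holds by definition.

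The main obstacle is the continuity in item (2) at points of the form \((g,p)\), where \(G\) is not assumed locally compact. The tube-lemma argument with the compact set \(K\) is the step I expect to need care; everything else is routine.
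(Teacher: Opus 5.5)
Your proof is correct. For items (1) and (3) it follows the paper's argument: $Y$ is a continuous image of a compact space, and $\pi|_{X\setminus K}$ is a continuous open bijection. You supply more detail, and you correctly write $Y=\pi(X)$ where the paper has $\pi(K)$.

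The genuine difference is the continuity in item (2). The paper settles it in one line by asserting that $\pi$ is open. That is false in general: for $X=[0,1]$, $K=\{0,1\}$ and $O=[0,\tfrac12)$, one has $\pi^{-1}(\pi(O))=[0,\tfrac12)\cup\{1\}$, which is not open. What is true, and what you use, is that $\pi$ is closed and that every open set containing $K$ is saturated. You combine these with compactness of $K$, via the tube lemma around $\{g_0\}\times K$, to show that $\mathrm{id}_G\times\pi$ is closed and hence a quotient map; your alternative checks continuity at $(g_0,p)$ directly.

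Your route gives a proof valid for an arbitrary topological group $G$. It needs neither openness of $\pi$ nor local compactness of $G$, which the general Whitehead lemma (``a quotient map times $\mathrm{id}_G$ is a quotient map'') would require. When $G$ is discrete, as in the paper's examples, the issue disappears. Continuity then reduces to continuity of each $\varphi'_g$, which follows from $\varphi'_g\circ\pi=\pi\circ\varphi_g$ and the universal property of $\pi$. So the obstacle you flagged only matters for non-discrete $G$.
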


\begin{proof}
\begin{enumerate}
  \item Since \(Y=\pi(K)\) is the image of the compact set \(K\) under the continuous map \(\pi\), the quotient \(Y=X/K\) is compact.

  \item The induced action \(\varphi'\) is well defined because if \(x\in K\), then \(\varphi(g,x)\in K\) for all \(g\), and thus \([\varphi(g,x)] = p\). The continuity of \(\varphi'\) follows from the fact that \(\pi\) is open and saturated.
  \item The restriction \(\pi|_{\,X\setminus K}\) is continuous, bijective, and open onto \(Y\setminus\{p\}\). Its inverse maps \([x]\mapsto x\) for \(x\notin K\). This implies that it is a homeomorphism.
\end{enumerate}
\end{proof}

\begin{lemma}\label{L:aislamiento-dinamico}
Under the notation of Lemma \ref{L:aislamiento-en-el-cociente}, the subset \(K\subset X\) is dynamically isolated in \(X\) \emph{if and only if} \(\{p\}\subset Y\) is dynamically isolated for the action \(\varphi'\). That is,
\begin{align*}
&\exists\,U\subset X\ \text{open with }K\subset U \ \text{and}\\
&\qquad \bigcap_{g\in G} \varphi(g,U)=K\\[2pt]
&\quad\Longleftrightarrow\quad
\exists\,V\subset Y\ \text{open with }p\in V \ \text{and}\\
&\qquad \bigcap_{g\in G} \varphi'(g,V)=\{p\}.
\end{align*}

\end{lemma}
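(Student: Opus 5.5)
The plan is to transport open sets back and forth along $\pi$. Two facts do all the work.

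First, $\pi$ is equivariant, $\pi(\varphi(g,x))=\varphi'(g,\pi(x))$, which is just the definition of $\varphi'$ in Lemma~\ref{L:aislamiento-en-el-cociente}.

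Second, open sets of $Y$ containing $p$ correspond exactly to saturated open sets of $X$ containing $K$, through $V\mapsto \pi^{-1}(V)$ and $U\mapsto\pi(U)$. This is the quotient topology, since an open $U\supseteq K$ is automatically saturated. Moreover, because $\varphi(g,\cdot)$ is a homeomorphism preserving $K$, equivariance gives
\[
\pi^{-1}\bigl(\varphi'(g,V)\bigr)=\varphi\bigl(g,\pi^{-1}(V)\bigr).
\]

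For ($\Leftarrow$), take $V$ open with $p\in V$ and $\bigcap_g\varphi'(g,V)=\{p\}$, and set $U=\pi^{-1}(V)$. Then $U$ is open and contains $K$. Since preimages commute with intersections,
\[
\bigcap_g\varphi(g,U)=\pi^{-1}\Bigl(\bigcap_g\varphi'(g,V)\Bigr)=\pi^{-1}(p)=K.
\]

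For ($\Rightarrow$), take $U$ open with $K\subset U$ and $\bigcap_g\varphi(g,U)=K$, and set $V=\pi(U)$. Since $U$ is saturated, $\pi^{-1}(V)=U$ is open, so $V$ is open in the quotient topology and contains $p$. Now let $[x]\in\bigcap_g\varphi'(g,V)$. By the displayed identity, $x\in\varphi(g,\pi^{-1}(V))=\varphi(g,U)$ for every $g$. Hence $x\in K$, so $[x]=p$. The reverse inclusion holds because $p$ is fixed by $\varphi'$.

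The only delicate point is the displayed identity $\pi^{-1}(\varphi'(g,V))=\varphi(g,\pi^{-1}(V))$. Verifying it pointwise needs that each $\varphi(g,\cdot)$ is a bijection with inverse $\varphi(g^{-1},\cdot)$ and that $K$ is invariant under both $g$ and $g^{-1}$. Then $\varphi(g,\cdot)$ maps saturated sets to saturated sets, and the identity follows. Everything else is routine manipulation of the quotient topology.
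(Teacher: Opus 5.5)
Your proof is correct and follows essentially the paper's route. The paper uses the same sets $V=\pi(U)$ and $U=\pi^{-1}(V)$ and argues pointwise via equivariance of $\pi$; your identity $\pi^{-1}(\varphi'(g,V))=\varphi(g,\pi^{-1}(V))$ simply packages that pointwise argument, and in fact holds for any equivariant map, so the saturation remark is not needed. Your version is slightly more careful: it justifies openness of $\pi(U)$ correctly through saturation of $U\supseteq K$, and it handles the trivial inclusions (such as $p\in\bigcap_g\varphi'(g,V)$) that the paper leaves implicit.
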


\begin{proof}
\begin{itemize}
  \item[\(\Rightarrow\)] Suppose that \(K\) is dynamically isolated in \(X\). Then there exists an open set \(U\subset X\) such that 
  \(K\subset U\) and \(\bigcap_{g}\varphi(g,U)=K\). Define \(V := \pi(U)\subset Y\). Since \(\pi\) is continuous and saturated, \(V\) is open and contains \(p\). For each \(y\neq p\) in \(Y\), choose \(x\notin K\) with \(\pi(x)=y\). Then \(x\notin \bigcap_{g}\varphi(g,U)\), so there exists \(g\) such that \(\varphi(g,x)\notin U\). Hence, \(\varphi'(g,y) = [\varphi(g,x)] \notin V\). Consequently, \(\bigcap_{g}\varphi'(g,V)=\{p\}\).
  
  \item[\(\Leftarrow\)] Suppose that \(\{p\}\) is dynamically isolated in \(Y\). Then there exists an open set \(V\subset Y\) with \(p\in V\) and 
  \(\bigcap_{g}\varphi'(g,V)=\{p\}\). Let \(U := \pi^{-1}(V)\subset X\). Then \(U\) is open and contains \(K = \pi^{-1}(\{p\})\). For any \(x\notin K\), there exists \(g\) such that \(\varphi'(g,[x])\notin V\), that is, \([\varphi(g,x)]\notin V\), which implies \(\varphi(g,x)\notin U\). Thus, we conclude that \(\bigcap_{g}\varphi(g,U)=K\).
\end{itemize}
\end{proof}

\begin{theorem}
\label{thmMIEIsolatedComplement}
Let \((X,d)\) be a compact metric space and let \(\varphi : G \times X \rightarrow X\) be an expansive action. Let \(K \subset X\) be a closed, \(\varphi\)-invariant subset. Then:
\[
  \varphi\bigl|_{\,X\setminus K}\text{ is MIE on }X \setminus K
  \quad\Longleftrightarrow\quad
  K\text{ is dynamically isolated.}
\]
\end{theorem}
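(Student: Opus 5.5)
Our plan is to reduce everything to Theorem~\ref{TeoEquiv} through the quotient of Lemma~\ref{L:aislamiento-en-el-cociente}. Put $Y=X/{\sim}$ with $p=[K]$. Then $X\setminus K$ is open in the compact metric space $X$, hence locally compact and $\sigma$-compact, i.e.\ an $\text{LC}\sigma$ space. By item (3) of the lemma, $Y$ is homeomorphic to the one-point compactification of $X\setminus K$, with $p$ as the added point; here we use that $\pi$ is a closed map, since $X$ is compact and $K$ is closed, and that $Y$ is Hausdorff. By item (2), $\varphi'$ is a continuous action on $Y$ that fixes $p$ and extends $\varphi|_{X\setminus K}$. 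By Theorem~\ref{TeoEquiv}, $\varphi|_{X\setminus K}$ is MIE if and only if it is expansively extendible at a point. Any extension to $X\setminus K\cup\{p\}$ that fixes $p$ coincides with $\varphi'$, so this says exactly that $\varphi'$ is expansive by coverings on $Y$. By Lemma~\ref{L:aislamiento-dinamico}, $K$ is dynamically isolated if and only if $\{p\}$ is dynamically isolated for $\varphi'$. It therefore suffices to prove:
\[
\varphi' \text{ is expansive on } Y \iff \{p\} \text{ is dynamically isolated for } \varphi'.
\]

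($\Rightarrow$) Let $\mathcal U$ be an expansivity cover of $Y$ and let $V\in\mathcal U$ contain $p$. If $y\in\bigcap_g g.V$, then for every $g$ we have $\{g.y,g.p\}=\{g.y,p\}\subset V$, because $p$ is fixed and $g^{-1}.y\in V$ ranges appropriately. We should index carefully: $y\in g.V$ for all $g$ means $g^{-1}.y\in V$ for all $g$, which is the same as $h.y\in V$ for all $h$. Expansivity then forces $y=p$, so $V$ isolates $p$.

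($\Leftarrow$) This is the main step. Let $V\ni p$ be open with $\bigcap_g g.V=\{p\}$. Take $U_0\subset X$ open with $K\subset U_0$ and $\overline{U_0}\subset \pi^{-1}(V)$. Let $c$ be an expansivity constant for $\varphi$ on $X$. Cover the compact set $X\setminus U_0$ by finitely many open sets of diameter $<c$, all disjoint from $K$ (possible since $K$ is compact and disjoint from $X\setminus U_0$). Let $\mathcal U$ consist of the images of these sets under $\pi$, together with $V$; these images are open because the sets are saturated. Now take $x\ne y$ in $Y$ with $\{g.x,g.y\}\prec\mathcal U$ for all $g$.

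\emph{Case $y=p$.} Every $g.x$ lies in a member of $\mathcal U$ containing $p$, and the only such member is $V$. Hence $x\in\bigcap_g g.V=\{p\}$, a contradiction.

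\emph{Case $x,y\neq p$.} Lift them to $\tilde x\neq\tilde y$ in $X\setminus K$. For each $g$, either the pair $g.\tilde x,g.\tilde y$ lies in a set of diameter $<c$, or both points lie in $\pi^{-1}(V)$. Expansivity gives some $g$ with $d(g.\tilde x,g.\tilde y)>c$, but that alone does not rule out both points lying in $\pi^{-1}(V)$. This is the obstacle.

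To handle it, we would shrink $V$ (intersections of finitely many translates still isolate $p$) and use a uniform version of isolation. By compactness of $Y$ and continuity, there should be a finite set $F\subset G$ such that $\bigcap_{f\in F}f.\overline{V'}\subset V$, where $V'$ is a smaller neighbourhood. The aim is to make the points whose whole orbit stays in $V$ accumulate only at $p$. The first thing to try is to refine the cover, replacing $V$ by a finite family $V\cap W_j$, where the $W_j$ are small sets pulled back from a cover of $X$ of mesh $<c$. The pair $\{g.x,g.y\}$ then always lies in a set of diameter $<c$ after lifting, except when one of the points equals $p$. This reduces the second case to the expansivity of $\varphi$: sets of the form $\pi(\text{small}\cup K)$ need care near $K$. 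Near $K$, we would use the isolation of $p$ and expansivity together, via the observation that if $g.\tilde x,g.\tilde y$ are close to $K$ for all $g$ in a long stretch, then isolation pushes an orbit out of $V$ at some time. Making this step rigorous is where we expect the real work to lie.
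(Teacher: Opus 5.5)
Your reduction is the same as the paper's. You pass to $Y=X/{\sim}$, view it as the one-point compactification of $X\setminus K$, and apply Theorem~\ref{TeoEquiv} and Lemma~\ref{L:aislamiento-dinamico}. This reduces everything to ``$\varphi'$ is expansive on $Y$ $\iff$ $\{p\}$ is dynamically isolated''. Your ($\Rightarrow$) argument is correct; the paper asserts this equivalence without proof.

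The gap is ($\Leftarrow$), and it is not a matter of missing technique: that implication is false, and with it the ($\Leftarrow$) direction of the theorem. So the obstacle you found in the case $x,y\neq p$ cannot be removed. Here is a counterexample.
\begin{itemize}
\item[$\cdot$] Take $X=\{0,1,2\}^{\Z}$ with the shift $(\sigma a)_k=a_{k+1}$, the metric $d(a,b)=2^{-\min\{|k|:a_k\neq b_k\}}$, and $K=\{0,1\}^{\Z}$.
\item[$\cdot$] $K$ is closed, invariant and dynamically isolated: with $U=\{a:a_0\neq 2\}$ one has $\bigcap_n\sigma^n(U)=K$.
\item[$\cdot$] On $X\setminus K$ use $\rho(a,b)=\min\{d(a,b),\,d(a,K)+d(b,K)\}$. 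This is the restriction of a compatible metric on $Y$, hence compatible on $X\setminus K$. Here $d(a,K)=2^{-\min\{|k|:a_k=2\}}$.
\item[$\cdot$] Let $x_N$ have a single $2$ at position $0$ and $0$ elsewhere, and let $y_N$ equal $x_N$ except $y_N(N)=1$.
\end{itemize}
Then for every $n\in\Z$,
\[
\rho(\sigma^n x_N,\sigma^n y_N)=\min\{2^{-|N-n|},\,2^{1-|n|}\}\le 2^{1-N/2}.
\]
So $\sigma|_{X\setminus K}$ has no expansivity constant for $\rho$ and is not MIE. Equivalently, $\varphi'$ is not expansive on $Y$, although $K$ is dynamically isolated.

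The example also shows why your proposed repair cannot work. Every open subset of $Y$ containing $p$ has preimage containing all of $K$. So no refinement of the cover can make the member at $p$ small in $X$. Moreover, isolation of $K$ controls single orbits, not pairs of orbits that shadow different orbits inside $K$. In the example, both orbits do leave $\pi^{-1}(V)$ near $n=0$, but only while they are $d$-close. The separating time $n=N$ occurs deep inside $\pi^{-1}(V)$, near two distinct points of $K$, and the quotient identifies those points.

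What survives in general is exactly the direction you proved: MIE $\Rightarrow$ $K$ dynamically isolated. The converse needs an extra hypothesis. For instance, take a homeomorphism and let $K$ be a finite orbit, as in Example~\ref{Anosov}. Once $g.\tilde x$ and $g.\tilde y$ are near distinct points of $K$ at one time, they remain so at all times, and this contradicts isolation. Note that the paper's own proof of Theorem~\ref{thmMIEIsolatedComplement} has the same defect: it simply asserts the equivalence you could not prove.
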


\begin{proof}
By Proposition~\ref{PreservMIE} we know that
\[
\varphi\bigl|_{X \setminus K} \text{ is MIE on } X \setminus K 
\;\Longleftrightarrow\;
\varphi'\bigl|_{Y \setminus \{p\}} \text{ is MIE on } Y \setminus \{p\}.
\]
But \( X \setminus K \) is \(\sigma\)-compact, and therefore so is \( Y \setminus \{p\} \). Then, by Theorem~\ref{TeoEquiv},
\[
\varphi'\bigl|_{Y \setminus \{p\}} \text{ is MIE on } Y \setminus \{p\}
\;\Longleftrightarrow\;
\varphi' \text{ is MIE on } Y.
\]
Moreover, since \( Y \) is compact, this is equivalent to \(\varphi'\) being expansive on \( Y \), that is,
\[
\varphi' \text{ is expansive on } Y
\;\Longleftrightarrow\;
\{p\} \text{ is dynamically isolated in } Y.
\]
Finally, by Lemma~\ref{L:aislamiento-dinamico},
\[
\{p\} \text{ is dynamically isolated in } Y
\;\Longleftrightarrow\;
K \text{ is dynamically isolated in } X.
\]
This concludes the proof.
\end{proof}

\begin{remark}
\label{rmkMechMIE}
The previous theorem provides a method for constructing MIE actions as well as actions that are expansive but not MIE. It suffices to consider an expansive action on a compact space and a closed invariant subset \(K\). If \(K\) is dynamically isolated, then the restriction of the action to its complement is MIE.
If \(K\) is not dynamically isolated, then the restricted action is expansive with respect to the metric of $X$ restricted to $K$, but it is not MIE.
\end{remark}

The following examples illustrates the mechanism of Remark \ref{rmkMechMIE}.

\begin{example}
\label{Anosov}
One of the classical examples of an expansive homeomorphism (in fact, a diffeomorphism) is the Anosov automorphism on \(T^2\).
We denote 
\[
  T^2 = \R^2 / \Z^2
\]
the quotient space of \(\R^2\) by the lattice \(\Z^2\). Each point in \(T^2\) is denoted \([x]\), with \(x \in \R^2\), and \([x]\) its equivalence class modulo \(\Z^2\). We equip \(T^2\) with the distance
\[
  d([x],[y]) = \inf_{m \in \Z^2} \| x - y + m \|,
\]
where \(\| \cdot \|\) is the Euclidean norm on \(\R^2\).
Consider the hyperbolic matrix
\[
  A = \begin{pmatrix} 2 & 1 \\ 1 & 1 \end{pmatrix} \in SL(2,\Z).
\]
We define the linear diffeomorphism
\[
  f : T^2 \to T^2, 
  \qquad 
  f([x]) = [A x].
\]
Let \( K \) be the orbit of a periodic point. It is easy to verify that \( K \) is dynamically isolated. Then, by Theorem~\ref{TeoEquiv}, the Anosov map restricted to \( X \setminus K \) is MIE.
\end{example}

\begin{example}
\label{exaRestNoMie}
If $f\colon X\to X$ is expansive and has the shadowing property (as for example an Anosov diffeomorphism, a full shift or a subshift of finite type) then the dynamically isolated subsets are exactly the closed invariant subsets with the shadowing property. It is known that for a transitive, expansive homeomorphism with shadowing the periodic points are dense.
Thus, we can take an infinite minimal subset of $X$ in order to obtain a non-dynamically isolated subset $K\subset X$. As $K$ is closed its complement $X\setminus K$ is open.
Moreoever, if $X$ is connected (as if, for example, we are considering an Anosov diffeomorphism of a torus) $X\setminus K$ is not closed.
Therefore we can apply Theorem \ref{thmMIEIsolatedComplement}
to conclude that the restriction of $f$ to the non-closed subset $X\setminus K$ is not MIE. This complements Proposition \ref{PreservMIE}.
\end{example}

% We can construct another elementary example by defining a dynamical system on \( [0, \omega + 1] \).

The next example is another application of Theorem \ref{thmMIEIsolatedComplement}.

\begin{example}
Let \(X = [0, \omega ]\), let \(x_{\infty} := \omega \), and define \(f : X \rightarrow X\) as follows:
\[
f(x) = 
\begin{cases}
x + 2 & \text{if } x \text{ is even}, \\
x - 2 & \text{if } x \text{ is odd and greater than 1}, \\
0     & \text{if } x = 1, \\
x_{\infty} & \text{if } x = x_{\infty}.
\end{cases}
\]
It is easy to verify that \(f\) is an expansive homeomorphism and that \(K = \{x_{\infty}\}\) is a closed dynamically isolated set. Then, by the previous theorem, \(f \upharpoonright X \setminus K\) is metric-independent. We observe that \(f \upharpoonright X \setminus K\) is conjugate to the homeomorphism introduced in Example~\ref{ejemplo1}. This yields an alternative proof that the latter is a MIE homeomorphism.
\end{example}

\begin{proposition}
Let \(X\) be a compact scattered topological space, and let \(\varphi : G \times X \rightarrow X\) be a \CBstable action. Then there exists a closed, invariant, and proper dynamically isolated set \(K \subset X\).
\end{proposition}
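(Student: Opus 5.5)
The plan is to take for $K$ the top nonempty Cantor--Bendixson derivative of $X$. Since $X$ is compact and scattered, the derived sets $X^{(\beta)}$ form a strictly decreasing transfinite sequence of closed sets. There is a (successor) ordinal $\alpha$ with $X^{(\alpha)}$ nonempty and finite and $X^{(\alpha+1)}=\varnothing$. Put $K:=X^{(\alpha)}$.

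The first two properties are routine. $K$ is closed, since derived sets are closed. $K$ is invariant, since each $\varphi_g$ is a homeomorphism and homeomorphisms preserve every derived set. $K$ is proper as soon as $X$ is infinite, because $K$ is finite. If $X$ is finite, I expect the \CBstable hypothesis (or the convention on derived degree) to exclude this case; otherwise one takes $K=\varnothing$ with $U=\varnothing$, which is trivially dynamically isolated. I will check which convention the definition of \CBstable in \cite{Ferrari2025} uses.

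The main step is to produce an open $U\supset K$ with $\bigcap_{g\in G} g.U=K$. I will use the following facts.
\begin{enumerate}
\item A compact scattered metrizable space is zero-dimensional.
\item I read \CBstable in the sense of \cite{Ferrari2025}: the action is expansive, and its behaviour on the top level is rigid, in that points of $K$ are fixed, or at least $G$ acts on $K$ through a finite permutation group.
\end{enumerate}
Fix a finite expansivity cover $\mathcal U$. For each $k\in K$ choose a clopen neighbourhood $V_k$ with the $V_k$ pairwise disjoint and each $V_k$ contained in a member of $\mathcal U$ containing $k$. Set $U=\bigcup_k V_k$. When $K$ is not pointwise fixed, first pass to the finite-index stabilizer of $K$. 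This subgroup is syndetic, so Proposition~\ref{th:herencia} (in its compact form, $K=X$) keeps it expansive. Then intersect $U$ with finitely many translates.

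Now let $x\notin K$ with $g.x\in U$ for all $g$. I want some $k\in K$ with $g.x\in V_{g.k}$ for every $g$. Then $\{g.x,g.k\}\prec\mathcal U$ for all $g$, and expansivity forces $x=k$, a contradiction.

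The obstacle I expect is exactly this \emph{matching} step. A priori the orbit of $x$ may jump between different components $V_k$ as $g$ varies. To handle it, I would first try to use the \CBstable hypothesis directly: the clopen sets $V_k$ can be chosen so that the action permutes them compatibly with its action on $K$, that is, $g.V_k\cap V_{k'}\neq\varnothing$ only if $g.k=k'$ on the relevant points, at least along orbits staying in $U$.

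If that fails, the fallback is the following. Consider the set of points whose whole orbit lies in $U$. By compactness and expansivity this set is closed, invariant, and disjoint from a neighbourhood of the top level once $U$ is shrunk. To obtain this, shrink $U$ using Lebesgue-number arguments for $\mathcal U$, and induct downward on the Cantor--Bendixson rank, using that nonempty closed invariant sets contain points of maximal rank. If such a point were not in $K$, its rank would be less than $\alpha$. Then a small neighbourhood of it, together with the finitely many $V_k$, would give the disjointness needed to run the expansivity contradiction above.
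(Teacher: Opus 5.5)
\textbf{The approach fails.} Your plan depends on the claim that the top level $K=X^{(\alpha)}$ is dynamically isolated. That claim is false in the generality of the statement, and the matching step you flag is exactly where it breaks.

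First, the proposition assumes no expansivity. In this paper CB-stability is treated as a consequence of continuity: the proof of the homoclinic-point proposition says ``since $f$ is continuous, it is CB-stable''. Expansivity is always listed as a separate hypothesis, so you have no expansivity cover to use. For the trivial action of any group on $\omega+1$ one has $\bigcap_g g.U=U\neq\{\omega\}$ for every open $U\ni\omega$.

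Second, even under your reading (expansive, top level fixed), the claim fails for infinitely generated $G$. Let $X=\{a,b\}\cup\{x_n\}_{n\ge1}\cup\{y_n\}_{n\ge1}$ with $x_n\to a$ and $y_n\to b$, a copy of $\omega\cdot 2+1$. Let $G=\bigoplus_{n\ge1}\mathbb{Z}/2$ act so that the $n$-th generator swaps $x_n$ and $y_n$ and fixes every other point. The clopen cover $\{A,B\}$, with $A=\{a\}\cup\{x_n\}_n$ and $B=\{b\}\cup\{y_n\}_n$, is an expansivity cover. Indeed, if $p\neq q$ lie on the same side, one of them is some $x_n$ or $y_n$, and the $n$-th generator moves it to the other side while fixing the other point. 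The points $a,b$ are fixed. Yet every neighbourhood $U$ of $\{a,b\}$ contains the orbit $\{x_n,y_n\}$ for all large $n$, so $\bigcap_g g.U\neq\{a,b\}$.

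This example defeats both your ``compatible'' choice of the $V_k$ and your fallback. Your first remedy does work when $G$ is finitely generated and the action is expansive. There, shrinking the $V_k$ so that each generator $s$ maps $V_k$ away from $V_{k'}$ for $k'\neq s.k$ forces an orbit staying in $\bigcup_k V_k$ to track an orbit in $K$. That case is not the proposition.

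\textbf{The missing idea.} You need not, and in general cannot, use the top level. The paper splits into cases. If some clopen neighbourhood of the top level has invariant union $W$, it takes $K=X\setminus W$. That set is clopen and invariant, hence trivially dynamically isolated with $U=K$. The paper's other case asserts without argument that the top level is dynamically isolated, so it does not supply the step you are missing. That assertion can in fact fail, for instance for a homeomorphism of $\omega+1$ whose isolated points form infinitely many infinite orbits.

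A uniform argument needs neither expansivity nor cases. Pick an isolated point $x_0$, put $U=X\setminus\{x_0\}$, and let $K=\bigcap_{g\in G}g.U=X\setminus G.x_0$. Homeomorphisms send isolated points to isolated points, so $G.x_0$ is open and $K$ is closed. The set $K$ is invariant, proper, and dynamically isolated by construction. It is nonempty whenever $X$ is infinite. For finite $X$, a transitive action leaves only $K=\varnothing$, as you anticipated.

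A minor slip: $\alpha$ need not be a successor (take $X=\omega^{\omega}+1$); it is the Cantor--Bendixson rank $\alpha+1$ that is.
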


\begin{proof}
We have already seen that $X$ is homeomorphic to $\omega^{\alpha}k + 1$ with the order topology. Therefore, we may assume, up to conjugation, that $X = \omega^{\alpha}k + 1$.

Let $Y = \lbrace \omega^{\alpha}i + 1 : i = 1, \ldots, k \rbrace$. Since $\varphi$ is \CBstable, it follows that $X$ is invariant under $\varphi$. 

If for every family of open-and-closed neighborhoods $U_i$ of $\omega^{\alpha}i + 1$, there exists $g \in G$ such that
\[
g\left( \bigcup_{i=1}^{k} U_i \right) \neq \bigcup_{i=1}^{k} U_i,
\]
then $K = Y$ is the closed dynamically isolated set. Otherwise, 
\[
K = \left( \bigcup_{i=1}^{k} U_i \right)^c
\]
is the closed dynamically isolated set.
\end{proof}

Using a similar reasoning as before, we obtain the following proposition. Note that in \cite{Artigue2024} we showed that in every countable compact space there exist doubly asymptotic points. The following result shows that in certain cases we can guarantee the existence of homoclinic points.

\begin{proposition}
Let $X$ be a countable compact metric space with characteristic $(\alpha + 1,1)$, and let $f : X \rightarrow X$ be an expansive homeomorphism. Then $f$ has nontrivial homoclinic points.
\end{proposition}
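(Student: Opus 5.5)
The plan is to work inside the top nontrivial derived set of $X$, where the topology is that of a single convergent sequence, and to find there a non-periodic point whose orbit converges to the unique point of maximal Cantor--Bendixson rank in both time directions. That point will be homoclinic to a fixed point.

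\textbf{Step 1: normal form and the fixed point.} By the classification recalled above, a countable compact metric space with characteristic $(\alpha+1,1)$ is homeomorphic to $\omega^{\alpha+1}+1$ with the order topology. So, up to conjugacy, I take $X=\omega^{\alpha+1}+1$. Derived sets are topological invariants, so $f$ preserves every $X^{(\beta)}$. The set $X^{(\alpha+1)}=\{x_\infty\}$ with $x_\infty=\omega^{\alpha+1}$ is a single point, hence $f(x_\infty)=x_\infty$.

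\textbf{Step 2: the invariant set $Y$.} Next I consider
\[
Y:=X^{(\alpha)}=\{\omega^{\alpha}n: n\ge 1\}\cup\{x_\infty\}.
\]
This is a closed invariant set homeomorphic to $\omega+1$: the points $y\neq x_\infty$ are isolated in $Y$, and $x_\infty$ is the only accumulation point. The restriction $f|_Y$ is still expansive, with the same constant $c$ for the metric $d$ of $X$. The key topological fact is that any sequence of pairwise distinct points of $Y$ converges to $x_\infty$.

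\textbf{Step 3: a non-periodic point in $Y$.} I will show that some $y\in Y\setminus\{x_\infty\}$ is not periodic, arguing by contradiction. Suppose every point of $Y$ is periodic. Only finitely many points of $Y$ lie outside the ball $B(x_\infty,c/2)$, so only finitely many (finite) orbits meet the complement of this ball. Since $Y\setminus\{x_\infty\}$ is infinite, there are then two distinct points $y\neq z$ of $Y$ whose entire orbits lie in $B(x_\infty,c/2)$. This gives $d(f^n y,f^n z)<c$ for all $n\in\Z$, contradicting expansivity.

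\textbf{Step 4: conclusion.} Take such a non-periodic $y\in Y\setminus\{x_\infty\}$. Its forward orbit $\{f^n(y)\}_{n\ge0}$ consists of pairwise distinct points of $Y$, and so does its backward orbit. By the topological fact in Step 2, $f^n(y)\to x_\infty$ as $n\to+\infty$ and as $n\to-\infty$. Hence $y$ is a homoclinic point of the fixed point $x_\infty$, and it is nontrivial because $y\neq x_\infty$ and $y$ is not periodic.

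The main point needing care is Step 3, the existence of a non-periodic point. It is the only place where expansivity is used, and it requires the clean counting argument that only finitely many points of $Y$ lie outside any neighbourhood of $x_\infty$. The remaining steps are formal once $Y$ is identified with $\omega+1$.
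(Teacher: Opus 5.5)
Your proof is correct and follows the same route as the paper: you reduce to $\omega^{\alpha+1}+1$, pass to the invariant set $Y$ of points of rank $\alpha$, and use expansivity to produce a point of $Y$ whose orbit converges to the top point $\omega^{\alpha+1}$ in both time directions. Your Step 3 is more careful than the paper, which only asserts that not all points of $Y$ can be fixed, whereas what is actually needed (and what you prove) is a non-periodic point of $Y$; the only blemish is that for $\alpha=0$ your explicit list of the points of $X^{(\alpha)}$ omits the point $0$, which is harmless because you only use that $X^{(\alpha)}$ is homeomorphic to $\omega+1$.
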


\begin{proof}
As we have already seen, we may assume that $X = \omega^{\alpha + 1} + 1$. Let 
\[
Y = \lbrace \omega^{\alpha}i + 1 : i \in \mathbb{N} \rbrace.
\]
Since $f$ is continuous, it is \CBstable; therefore, $Y$ is invariant under $f$. Observe that, due to the expansiveness of $f$, not all points can be fixed. Hence, there exists 
\[
x = \omega^{\alpha}j + 1, \quad \text{with } j \in \mathbb{N},
\]
such that
\[
\lim_{n \rightarrow +\infty} f^n(x) = \lim_{n \rightarrow -\infty} f^n(x) = \omega + 1.
\]
\end{proof}

\section{Metric-Independent Expansivity and Completion of Metric Spaces}

\begin{definition}
Let \(X\) be a metric space and \(\varphi : G \times X \rightarrow X\) an action. We say that it is uniformly continuous if the map \(T_g : X \rightarrow X\), defined by \(T_g(x) := g.x\), is uniformly continuous for all $g \in G$.
\end{definition}

\begin{definition}
Let \((X,d)\) be a metric space and \(\varphi : G \times X \rightarrow X\) a continuous action. We say that \(\varphi\) is \emph{Cauchy expansive} if there exists a constant \(c > 0\) such that for every sequence \((x_n, y_n)_{n \in \mathbb{N}} \subset X \times X\) satisfying:
\begin{enumerate}
    \item the sequence \((x_n, y_n)_{n \in \mathbb{N}} \subset X \times X\) is Cauchy in \(X\times X\),
    \item \(\inf_{n \in \mathbb{N}} d(x_n, y_n) > 0\),
\end{enumerate}
there exist \(n_0 \in \mathbb{N}\) and \(g \in G\) such that for all \(n \geq n_0\), one has
\[
d(\varphi(g,x_n), \varphi(g,y_n)) > c.
\]
\end{definition}

\begin{remark}
It is clear that Cauchy expansiveness implies expansiveness, simply by taking, for any \(x, y \in X\), the constant sequences \(x_n = x\) and \(y_n = y\).
\end{remark}

\begin{theorem}\label{ThmExtExp}
Let \((X,d)\) be a metric space and
\(\varphi : G \times X \longrightarrow X\) a Cauchy expansive action
and uniformly continuous. Let \(\hat{X}\) denote the completion of \(X\).
Then there exists a unique continuous and expansive action
\[
\hat{\varphi} : G \times \hat{X} \longrightarrow \hat{X},
\]
such that \(\hat{\varphi}\!\restriction_{\,X} = \varphi\);
moreover, the expansivity constant can be taken equal to that of \(\varphi\).
\end{theorem}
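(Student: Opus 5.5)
The proof splits into three steps: construct the extension, check that it is a unique continuous action, and transfer expansivity through Cauchy sequences.

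\emph{Construction.} For each $g\in G$, the map $T_g$ is uniformly continuous on $X$, so it extends uniquely to a uniformly continuous map $\hat T_g\colon \hat X\to\hat X$. Concretely, if $\xi\in\hat X$ is the limit of a Cauchy sequence $(x_n)\subset X$, we set $\hat T_g(\xi)=\lim g.x_n$. This limit exists because uniformly continuous maps send Cauchy sequences to Cauchy sequences, and it does not depend on the representative.

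\emph{Action axioms, continuity and uniqueness.} The identities $\hat T_e=\mathrm{id}$ and $\hat T_{gh}=\hat T_g\circ\hat T_h$ hold on the dense set $X$. Both sides are continuous, so they hold on $\hat X$. Hence $\hat\varphi(g,\xi):=\hat T_g(\xi)$ is an action. Each $\hat T_g$ is a homeomorphism, since its inverse is $\hat T_{g^{-1}}$. Uniqueness follows because two continuous maps that agree on a dense subset of a Hausdorff space agree everywhere. If $G$ is discrete, continuity of $\hat\varphi$ as a map on $G\times\hat X$ is then automatic. For a general topological $G$ this is a gap: I would read ``continuous'' as continuity of each $\hat T_g$, which seems to be the intended meaning given how uniform continuity is defined in the paper.

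\emph{Expansivity.} Let $c$ be the Cauchy expansivity constant, and let $\xi\neq\eta$ in $\hat X$. Choose sequences $x_n\to\xi$ and $y_n\to\eta$ in $X$. Then $(x_n,y_n)$ is Cauchy in $X\times X$. Since $d(x_n,y_n)\to\hat d(\xi,\eta)>0$, after discarding finitely many terms we get $\inf_n d(x_n,y_n)>0$; dropping terms keeps the sequence Cauchy and does not change the limits. Cauchy expansiveness then gives $g\in G$ and $n_0$ such that $d(g.x_n,g.y_n)>c$ for all $n\ge n_0$. Passing to the limit, using continuity of $\hat T_g$ and of $\hat d$, gives
\[
\hat d(g.\xi,g.\eta)=\lim_n d(g.x_n,g.y_n)\ge c .
\]

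This yields a non-strict inequality, which is the only delicate point. The definition of expansiveness used in the paper asks for a strict inequality $>c$. I would handle this by taking $c'=c/2$, or any $c'<c$, as the expansivity constant. Alternatively, to keep the same constant $c$ as stated, note that the paper's definition is equivalent to requiring ``$\ge$'' up to shrinking $c$ arbitrarily little. I would state that the constant can be taken to be any number smaller than $c$, or equal to $c$ in the non-strict formulation. Everything else is routine completion theory.
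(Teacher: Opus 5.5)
Your proof is correct and follows essentially the same route as the paper: extend each $T_g$ by uniform continuity, obtain the action axioms and uniqueness by density, and pass Cauchy expansiveness to the limit to get $\hat d(g.\xi,g.\eta)\ge c$. Your two extra precautions refine the paper's argument soundly. First, discarding finitely many terms makes $\inf_n d(x_n,y_n)>0$ actually hold; the paper's bound $\inf_n d(x_n,y_n)\ge\frac12 d(\hat x,\hat y)$ is only valid for large $n$. Second, the limit gives only a non-strict inequality, so $c$ itself works in the ``$\ge$'' convention, while any $c'<c$ works in the strict one.
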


\begin{proof}
For each \(g \in G\), the map \(\varphi_g : X \to X\) is uniformly continuous,
and \(X\) is dense in \(\hat{X}\); the uniform extension theorem
provides a \emph{unique} continuous function
\(\hat{\varphi}_g : \hat{X} \to \hat{X}\)
with \(\hat{\varphi}_g\!\restriction_X = \varphi_g\).

\smallskip
For the identity element \(e\), we obtain
\(\hat{\varphi}_e = \operatorname{id}_{\hat{X}}\) by uniqueness.
If \(g,h \in G\), since \(\varphi_{gh} = \varphi_g \circ \varphi_h\) on \(X\),
the uniqueness of the extensions gives
\(\hat{\varphi}_{gh} = \hat{\varphi}_g \circ \hat{\varphi}_h\).
Defining
\(\hat{\varphi}(g,\hat{x}) := \hat{\varphi}_g(\hat{x})\),
we obtain a continuous action of \(G\) on \(\hat{X}\).

\smallskip
Let \(\varepsilon > 0\) be the expansivity constant of \(\varphi\).
Take \(\hat{x} \neq \hat{y}\) in \(\hat{X}\).
There exist sequences \(x_n, y_n \in X\) with \(x_n \to \hat{x}\) and \(y_n \to \hat{y}\).
Since \(\inf_n d(x_n, y_n) \ge \frac{1}{2} d(\hat{x}, \hat{y}) > 0\),
the pair \((x_n, y_n)\) satisfies the conditions to apply the hypothesis of Cauchy expansiveness, and thus there exist \(g \in G\) and \(n_0 \in \mathbb{N}\) such that
\(d(\varphi_g(x_n), \varphi_g(y_n)) > \varepsilon\) for all \(n \ge n_0\).
Passing to the limit and using the continuity of \(\hat{\varphi}_g\), we obtain:
\[
d\bigl(\hat{\varphi}_g(\hat{x}), \hat{\varphi}_g(\hat{y})\bigr)
= \lim_{n \to \infty} d\bigl(\varphi_g(x_n), \varphi_g(y_n)\bigr)
\;\ge\; \varepsilon.
\]
Therefore, \(\hat{\varphi}\) is expansive.
\end{proof}

\begin{corollary}
Let \(X\) be a totally bounded metric space, \(\varphi : G \times X \rightarrow X\) a uniformly continuous Cauchy expansive action, and suppose that \(\hat{X} \setminus X\) is closed.\\
If \(\hat{X} \setminus X\) is dynamically isolated, then \(\varphi\) is metric-independent.
\end{corollary}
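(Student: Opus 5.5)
The plan is to reduce the statement to Theorem~\ref{thmMIEIsolatedComplement}, applied to the completion. Since \(X\) is totally bounded, its completion \(\hat{X}\) is complete and totally bounded, hence a compact metric space. By Theorem~\ref{ThmExtExp}, the uniformly continuous Cauchy expansive action \(\varphi\) extends uniquely to a continuous expansive action \(\hat{\varphi}\colon G\times\hat{X}\to\hat{X}\). This puts us in the setting of Theorem~\ref{thmMIEIsolatedComplement}, with the compact space \(\hat{X}\) and \(K:=\hat{X}\setminus X\). By hypothesis, \(K\) is closed and dynamically isolated.

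The one remaining hypothesis of Theorem~\ref{thmMIEIsolatedComplement} is that \(K\) is \(\hat{\varphi}\)-invariant, and I would check this next. Each \(\hat{\varphi}_g\) is a homeomorphism of \(\hat{X}\), with inverse \(\hat{\varphi}_{g^{-1}}\), by the uniqueness argument in the proof of Theorem~\ref{ThmExtExp}. Also, \(\hat{\varphi}_g\) maps \(X\) onto \(X\), because it restricts to the bijection \(\varphi_g\). Being a bijection of \(\hat{X}\), it must therefore map the complement \(\hat{X}\setminus X\) onto itself. So \(K\) is invariant.

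With all hypotheses in place, Theorem~\ref{thmMIEIsolatedComplement} gives that \(\hat{\varphi}|_{\hat{X}\setminus K}=\varphi\) is MIE on \(\hat{X}\setminus K = X\), which is the claim. I would also record why the metrizable locally compact \(\sigma\)-compact framework used inside Theorem~\ref{thmMIEIsolatedComplement} applies: since \(K\) is closed, \(X\) is open in the compact metric space \(\hat{X}\), so \(X\) is locally compact and \(\sigma\)-compact.

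I expect the main obstacle to be bookkeeping rather than substance. We must make sure that \enquote{expansive} for \(\hat{\varphi}\) in Theorem~\ref{ThmExtExp} is the metric notion on the compact space \(\hat{X}\), so that it matches the hypothesis of Theorem~\ref{thmMIEIsolatedComplement}; in the compact case this agrees with cover expansivity. We must also handle the degenerate case \(K=\emptyset\), where \(X\) is compact and MIE follows directly from expansiveness on a compact metric space.
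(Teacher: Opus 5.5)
\textbf{The approach matches the paper's, but the step it depends on is false, and so is the corollary.}

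Your reduction is the same as the paper's. You extend $\varphi$ to the compact completion by Theorem~\ref{ThmExtExp} and apply Theorem~\ref{thmMIEIsolatedComplement} with $K=\hat X\setminus X$. Your extra checks are correct and fill in what the paper omits: compactness of $\hat X$, invariance of $K$ via bijectivity of $\hat\varphi_g$, $X$ open hence $\text{LC}\sigma$, and the case $K=\emptyset$.

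The trouble is the black box. The implication ``$K$ dynamically isolated $\Rightarrow$ $\varphi|_{X\setminus K}$ MIE'' in Theorem~\ref{thmMIEIsolatedComplement} is false. In the paper's proof of that theorem, the unjustified step is that isolation of $\{p\}$ in $Y=X/K$ makes $\varphi'$ expansive; only the converse holds. Dynamical isolation of $K$ gives only the cocompactness part of Definition~\ref{cocompactExP}: every orbit in $X$ meets the compact set $\hat X\setminus U$. It does not give condition~(2). Two distinct points may be separated only at those $g$ for which both $g\cdot x$ and $g\cdot y$ are close to $K$.

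\textbf{Counterexample.} Let $\Sigma=\{0,1,2\}^{\Z}$ with $d(x,y)=2^{-\min\{|i|:x_i\neq y_i\}}$ and the shift $\sigma$. Let $K=\{0,1\}^{\Z}$, and let $X=\Sigma\setminus K$ (the sequences containing a $2$), with the restricted metric and $\Z$-action.
\begin{itemize}
\item \emph{Hypotheses on the space.} $X$ is totally bounded and dense in $\Sigma$, so $\hat X=\Sigma$ and $\hat X\setminus X=K$ is closed. $K$ is dynamically isolated, since $U=\{x:x_0\neq 2\}$ gives $\bigcap_{n}\sigma^n(U)=K$.
\item \emph{Hypotheses on the action.} The shift is uniformly continuous. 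It is Cauchy expansive with $c=1/2$: a Cauchy sequence of pairs with $\inf_n d(x_n,y_n)>0$ converges in $\Sigma\times\Sigma$ to some $(x,y)$ with $x_i\neq y_i$ for some $i$. Then $d(\sigma^i x_n,\sigma^i y_n)=1$ for all large $n$.
\item \emph{A compatible metric.} Set $\rho(x,y)=\min\{d(x,y),\,d(x,K)+d(y,K)\}$. This is a compatible metric on $X$: its balls of radius $r<d(x,K)$ centred at $x$ coincide with the $d$-balls. Moreover $d(z,K)=2^{-\min\{|i|:z_i=2\}}$.
\item \emph{The non-separated pair.} Let $x^N$ have a single $2$ at coordinate $N$ and $0$ elsewhere, and let $y^N$ differ from $x^N$ only by $y^N_0=1$. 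Then for every $k\in\Z$,
\[
\rho(\sigma^k x^N,\sigma^k y^N)=\min\{2^{-|k|},\,2^{1-|N-k|}\}\le 2^{1-N/2}.
\]
\end{itemize}
Hence $\sigma|_X$ is not expansive for $\rho$, so it is not MIE, although every hypothesis of the corollary holds. Equivalently, the induced map on $\Sigma/K$, the one-point compactification of $X$, is not expansive even though $p$ is dynamically isolated.

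No further bookkeeping can close your argument. The statement needs a hypothesis stronger than dynamical isolation of $\hat X\setminus X$, namely condition~(2) of cocompact expansivity, i.e.\ separation at some $g$ with $\{g\cdot x,g\cdot y\}\not\subset U$.
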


\begin{proof}
Let \(\hat{\varphi} : G \times \hat{X} \rightarrow \hat{X}\) be the expansive extension of \(\varphi\). Then
\(\hat{\varphi} \upharpoonright (\hat{X} \setminus (\hat{X} \setminus X)) \equiv \varphi\). By the previous theorem, \(\varphi\) is metric-independent if \(\hat{X} \setminus X\) is dynamically isolated.
\end{proof}

\end{document}